\documentclass[12pt,reqno]{amsart}
\usepackage{amssymb,color}

\textwidth=14.9cm \textheight=23cm \topmargin=0cm
\oddsidemargin=0.5cm \evensidemargin=0.5cm
\usepackage[dvips]{epsfig}

\newcommand{\e}{\varepsilon}

\newcommand{\M}{\mathcal{M}}
\newcommand{\la}{\lambda}
\newcommand{\al}{\alpha}

\newcommand{\fy}{\varphi}

\newcommand{\p}{\partial}
\newcommand{\I}{\infty}

\newcommand{\R}{\mathbb{R}}

\newcommand{\C}{\mathbb{C}}

\renewcommand{\Im}{\mathop{\mathrm{Im}}}
\renewcommand{\bar}{\overline}
\renewcommand{\hat}{\widehat}

\renewcommand{\r}{\rho}

\newcommand{\Sg}{\mathfrak{S}}

\numberwithin{equation}{section}

\newtheorem{thm}{Theorem}[section]
\newtheorem{cor}[thm]{Corollary}
\newtheorem{lem}[thm]{Lemma}
\newtheorem{prop}[thm]{Proposition}
\theoremstyle{remark}
 
\newcommand{\ran}{\rangle}
\newcommand{\lan}{\langle}

\newcommand{\lec}{\lesssim}
\newcommand{\gec}{\gtrsim}

\newcommand{\EQ}[1]{\begin{equation} \begin{split} #1 \end{split} \end{equation}}
\setlength{\marginparwidth}{2cm}

\newcommand{\Del}[1]{}
\newcommand{\CAS}[1]{\begin{cases} #1 \end{cases}}
\newcommand{\pt}{&}
\newcommand{\pr}{\\ &}
\newcommand{\pq}{\quad}

\newcommand{\LR}[1]{{\lan #1 \ran}}
\newcommand{\de}{\delta}
\newcommand{\si}{\sigma}

\renewcommand{\t}{\tau}

\newcommand{\ka}{\kappa}
\newcommand{\ga}{\gamma}
\newcommand{\x}{\xi}

\newcommand{\na}{\nabla}

\newcommand{\Cu}{\bigcup}

\newcommand{\sg}{\mathfrak{s}}
\newcommand{\HH}{\mathcal{H}}

\def\dist{\mathrm{dist}}
\def\calH{{\mathcal H}}

\def\nn{\nonumber}

\def\eps{\varepsilon}
\def\sign{\mathrm{sign}}

\def\calM{{\mathcal M}}

\def\calL{{\mathcal L}}
\def\diag{\mathrm{diag}}
\def\calF{\mathcal{F}}


\author{J.~Krieger}
\address{ B\^atiment des Math\' ematiques,
Station 8, 
CH-1015 Lausanne, Suisse} 
\email{ joachim.krieger@epfl.ch  }

\author{K.~Nakanishi}
\address{Department of Mathematics, Kyoto University\\ Kyoto 606-8502, Japan}
\email{n-kenji@math.kyoto-u.ac.jp}

\author{W.~Schlag}
\address{Department of  Mathematics, The University of Chicago\\ Chicago, IL 60615, U.S.A.} 
\email{schlag@math.uchicago.edu} 

\thanks{The third author was supported by the NSF, DMS-0617854, and a Guggenheim fellowship.}

\title[Global dynamics above the ground state energy]{Global dynamics above the ground state energy\\ for the one-dimensional  NLKG equation}


\begin{document}



\subjclass[2010]{35L70, 35Q55} 
\keywords{nonlinear wave equation, ground state, hyperbolic dynamics, stable manifold, unstable manifold, scattering theory, blow up}

\begin{abstract}
In this paper we obtain a global characterization of the dynamics of even solutions to the one-dimensional
nonlinear Klein-Gordon (NLKG) equation on the line with focusing nonlinearity $|u|^{p-1}u$, $p>5$, provided their energy exceeds that
of the ground state only sightly. The method is the
same as in the three-dimensional case~\cite{NakS}, the major difference being in the construction of the center-stable manifold.
The difficulty there lies with the weak dispersive decay of $1$-dimensional NLKG. In order to address this specific issue, we
establish local dispersive estimates for the perturbed linear Klein-Gordon equation, similar to those of Mizumachi~\cite{Miz}.
The essential ingredient for the latter class of estimates is the absence of a threshold resonance of the linearized operator. 
\end{abstract}

\maketitle

\section{Introduction}

In this paper we address the long-time evolution of solutions to the nonlinear Klein-Gordon equation
\EQ{
\label{eq:NLKG}
 u_{tt} - u_{xx} + u = |u|^{p-1} u
}
in $1+1$ dimensions, and with $p>5$. It is standard that this equation is locally wellposed in the energy class $H^{1}\times L^{2}$
and can exhibit both global existence (for example for small data) as well as finite time blowup (for example, for negative energy). 
In addition, equation~\eqref{eq:NLKG} admits explicit soliton solutions
\EQ{
\label{eq:soliton}
Q(x) =  \alpha \cosh^{-\frac{1}{\beta}}(\beta x),\qquad \al=\big(\frac{p+1}{2}\big)^{\frac{1}{p-1}}, \;\; \beta=\frac{p-1}{2}
}
Our goal in this paper is to prove the following result, where 
\EQ{
E(u,\dot u) = \int_{-\I}^{\I} \big[ \frac12(|\p_{x} u|^{2} + |\dot u|^{2} + |u|^{2}) - \frac{1}{p+1} |u|^{p+1} \big]\, dx
}
is the conserved energy. 

\begin{thm}
\label{thm:main} 
Let $p>5$. There exists $\eps>0$ such that any even data $(u_0,u_1)\in H^1\times L^2(\R)$ with energy
\EQ{
\label{eq:Econd}
E(u,\dot u)< E(Q,0) + \eps^2
}
have the property that the solutions $u(t)$ of~\eqref{eq:NLKG} associated with these data  exhibit the following trichotomy:
\begin{itemize}
\item $u$ blows up in finite positive time
 \item $u$ exists globally and scatters to zero as $t\to\I$
\item $u$ exists globally and scatters to $Q$, i.e., there exist $(v_0,v_1)\in H^1\times L^2$ with the property that
\EQ{
\label{eq:Qscatters}
(u(t),\dot u(t)) = (Q,0) + (v(t),\dot v(t)) + o_{\calH}(1)\qquad t\to\I
}
for some free Klein-Gordon solution $(v(t),\dot v(t))\in H^1\times L^2$. 
\end{itemize}
In addition, the set of even data as in~\eqref{eq:Econd} splits into nine nonempty disjoint sets corresponding to all possible combinations
of this trichotomy as $t\to\pm\I$. 
\end{thm}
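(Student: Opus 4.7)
\medskip

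\noindent\textbf{Proof plan.} My strategy would be to follow the Nakanishi--Schlag program from the three-dimensional case~\cite{NakS} and adapt it to the one-dimensional situation, paying special attention to the construction of the invariant manifolds where the weak $t^{-1/2}$ dispersion forces a substitute for Strichartz estimates. The first step is to linearize \eqref{eq:NLKG} around $Q$: setting $u=Q+v$, the linearized operator $L_+=-\p_x^2+1-pQ^{p-1}$ (acting on even functions) has a simple negative eigenvalue $-k^2$ with ground state $\rho$, trivial kernel on the even sector (translation is killed by the symmetry assumption), and purely absolutely continuous spectrum on $[1,\I)$. Decompose $v=\lambda\rho+\gamma$ with $\gamma\perp\rho$; then the $\lambda$-coordinate obeys an ODE with a hyperbolic fixed point, while $\gamma$ satisfies a dispersive equation with potential.

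Next I would construct a local center--stable manifold $\calM_{cs}$ through $(Q,0)$ by a fixed point argument in a space that tracks both the exponential decay along the stable direction and the dispersive decay of the centered part. Here is where the one-dimensional difficulty concentrates: the free Klein--Gordon flow only gives $\|e^{it\langle\nabla\rangle}f\|_\I\lec t^{-1/2}\|f\|_{W^{1,1}}$, which is not integrable, so the standard contraction cannot close. The plan is to use local-in-space estimates of Mizumachi type~\cite{Miz} for the perturbed propagator $e^{it\sqrt{L_+}}P_c$, which hold precisely because $L_+$ has no threshold resonance at $1$. One verifies the absence of threshold resonance from the explicit form of $Q$: the generalized eigenfunction problem at the edge reduces to a Legendre-type ODE whose two solutions are explicit and neither lies in $L^\I$. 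Granted this, one obtains weighted dispersive bounds $\|\langle x\rangle^{-\sigma}e^{it\sqrt{L_+}}P_c f\|_{L^2}\lec\langle t\rangle^{-3/2}\|\langle x\rangle^\sigma f\|_{L^2}$, which are integrable and allow the nonlinear contraction to close. The center--unstable manifold $\calM_{cu}$ is obtained by the time-reversal symmetry. This construction of $\calM_{cs}$ is the main obstacle; everything else is essentially the three-dimensional argument.

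With the manifolds in hand, I would next carry out the variational/ejection step. Define the scaling functional $K(\varphi)=\int(|\p_x\varphi|^2+|\varphi|^2-|\varphi|^{p+1})\,dx$ and, using $E<E(Q,0)+\eps^2$, split the energy surface near $Q$ into the sign-of-$K$ halves. Trajectories on $\calM_{cs}$ scatter to $Q$; those off $\calM_{cs}$ are ejected from a small neighborhood of $\{\pm Q\}$ in a quantitative way (the one-pass lemma), with the sign of $\lambda$ at the ejection time determining the sign of $K$ afterwards. Once ejected, one is in the subthreshold regime for the $K$-sign-definite region, and the Payne--Sattinger dichotomy combined with Kenig--Merle-type concentration compactness (already developed for 1D focusing supercritical NLKG for $p>5$) gives either scattering to $0$ (when $K>0$) or finite-time blow-up (when $K<0$). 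This yields the trichotomy at $t\to+\I$.

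Finally, for the nine-set decomposition, apply the same trichotomy at $t\to-\I$ by time reversal and note that $\calM_{cs},\calM_{cu}$ are codimension-one immersed manifolds meeting transversally at $(Q,0)$ on the energy level set. Non-emptiness of each of the nine combinations follows by perturbing off $(Q,0)$ in suitable two-parameter families built from the stable/unstable eigenfunction $\rho$ and a generic direction in the center subspace, and applying continuity of the ejection time together with the trichotomy to each family; the combinations involving scattering to $Q$ in forward or backward time are witnessed by points on $\calM_{cs}$ or $\calM_{cu}$ themselves. The main new analytical content, and the step I expect to demand the most care, is the Mizumachi-type local dispersive theory for $L_+$ and its use in the contraction defining $\calM_{cs}$.
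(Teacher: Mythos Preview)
Your plan is correct and mirrors the paper's approach essentially step for step: the nine-set theorem is lifted from~\cite{NakS} (ejection lemma, one-pass theorem, sign functional, Payne--Sattinger blowup, Kenig--Merle scattering via~\cite{IMN}), and the only genuinely new ingredient is the center--stable manifold, built by a contraction that combines Strichartz norms with Mizumachi-type local dispersive estimates for $e^{it\sqrt{L_+}}P_c$, valid because $L_+$ has no threshold resonance for $p>3$. The one minor technical difference is that the paper does not use the pointwise-in-time weighted bound $\|\langle x\rangle^{-\sigma}e^{it\sqrt{L_+}}P_cf\|_{L^2_x}\lesssim\langle t\rangle^{-3/2}\|\langle x\rangle^{\sigma}f\|_{L^2_x}$ you propose, but rather an $L^\infty_xL^2_t$ smoothing estimate $\|\langle x\rangle^{-1}e^{it\sqrt{L_+}}P_cf\|_{L^\infty_xL^2_t}\lesssim\|\langle\partial_x\rangle^{1/2}f\|_{L^2}$ (proved via the distorted Fourier transform and the regularity of the spectral measure at the edge), upgraded to $L^2_tH^{1/2}_x$ by an intertwining trick with $L_-$; either version closes the contraction on the localized quadratic term $Q^{p-2}v^2$, while the purely nonlinear piece $|v|^p$ is handled by Strichartz.
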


Moreover, we obtain a characterization of the threshold solutions, i.e., those with energies $E(\vec u)=E(Q,0)$, cf.~\cite{DM1}, \cite{DM2}.
In fact, we find the following.

\begin{cor}
\label{cor:DM} 
The even solutions to \eqref{eq:NLKG} with energy   $E(\vec u)=E(Q,0)$ can be characterized as follows: 
\begin{itemize}
\item
 they blow up in both
the positive and negative time directions
\item the exist globally on $\R$ and scatter as $t\to\pm\I$
\item they are constant $\pm Q$
\item they equal one of the following solutions, for some $t_{0}\in\R$: 
\[
\si W_{+}(t+t_{0}, x),\quad \si W_{-}(t+t_{0},x) , \quad \si W_{+}(-t+t_{0}, x),\quad \si W_{-}(-t+t_{0},x) 
\]
where $(W_{\pm}(t,\cdot),\p_{t } W_{\pm}(t,\cdot))$ approach $(Q,0)$ exponentially fast in $\HH$ as $t\to\I$, and $\si=\pm1$. 
As $t\to-\I$, $W_{+}$ scatters to zero, whereas $W_{-}$ blows up in finite time. 
\end{itemize}
\end{cor}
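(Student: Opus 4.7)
The plan is to adapt the Duyckaerts--Merle threshold classification~\cite{DM1,DM2} to the present one-dimensional setting, built on top of the hyperbolic dynamics near $\pm Q$ developed en route to Theorem~\ref{thm:main}. Linearizing~\eqref{eq:NLKG} at $Q$ yields the self-adjoint operator $L_+ := -\p_x^2 + 1 - pQ^{p-1}$, which admits a unique simple negative eigenvalue $-k^2$ with an even ground state $\r$; accordingly the matrix linearization has two real eigenvalues $\pm k$, each with one-dimensional eigenspace, and a continuous spectrum on $i\R$. This hyperbolic structure powers the entire argument.

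First I would construct $W_\pm$ by a contraction in an exponentially weighted space. Writing $\vec u = (Q,0)+\vec v$, I look for $\vec v$ in
\EQ{
\mathcal{X} := \Bigl\{ \vec v\in C([0,\I);\HH) : \|\vec v\|_{\mathcal{X}} := \sup_{t\ge 0} e^{kt}\|\vec v(t)\|_\HH < \I \Bigr\},
}
with prescribed leading term $\vec v(t) = \si e^{-kt}(\r,-k\r) + o(e^{-kt})$ for $\si\in\{\pm 1\}$. In the Duhamel formulation with the unstable direction integrated from $+\I$ and the stable and dispersive directions from $0$, the nonlinearity is at least quadratic in $\vec v$, hence gains an extra factor $e^{-kt}$ in the weight. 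This closes a contraction on a small ball of $\mathcal{X}$ and produces two one-parameter families $\{W_\si(\cdot+t_0,\cdot)\}_{t_0\in\R}$, $\si=\pm$, of even solutions approaching $(Q,0)$ in $\HH$ at the rate $e^{-kt}$.

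Next, let $\vec u$ be a general even threshold solution and set $B_\de := \{\vec w: \inf_\pm\|\vec w-(\pm Q,0)\|_\HH<\de\}$. If $\vec u(t)\in B_\de$ for all $t\ge 0$, a compactness/rigidity argument (profile decomposition at $E=E(Q,0)$, where the only non-dispersive profile is $\pm Q$) combined with the exponential dichotomy forces $\vec u(t)\to\pm(Q,0)$ at rate $e^{-kt}$; by the uniqueness from Step~1, $\vec u$ then equals, up to a sign and time translation, either $\pm Q$ itself or one of the $W_\pm$. Otherwise $\vec u$ exits $B_\de$ in forward time, and the ejection lemma together with the sign-determined one-pass/trichotomy theorem underlying Theorem~\ref{thm:main} -- which go through unchanged at $E=E(Q,0)$ -- prevent re-entry and force either blow-up in finite positive time (when $K(u)<0$ at the exit) or scattering to $0$ (when $K(u)>0$), where $K(u) := \int (|\p_x u|^2 + u^2 - |u|^{p+1})\,dx$.

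Applying the same analysis backward finishes the list. If $\vec u$ is trapped in $B_\de$ only as $t\to\I$, the one-dimensionality of the stable manifold identifies $\vec u$ with some $\pm W_\si(\cdot+t_0,\cdot)$, and the sign of $K(Q+\si e^{-kt}\r+\cdots)$ -- which is linear in $\si$ to leading order, since $K(Q)=0$ -- pins down which of the two has backward scattering and which has backward blow-up, giving the labelling $W_\pm$. A solution free at both ends has, by the one-pass theorem, a definite sign of $K$ throughout its untrapped phase, so forward scattering forces backward scattering and likewise for blow-up; the mixed cases do not occur, and the listed classes are exhaustive. The main obstacle is Step~1: arranging the center-stable Duhamel formulation so that the nonlinear terms remain inside the weighted norm, and verifying that the stable manifold produced is genuinely one-dimensional, which is exactly what makes the uniqueness identification in Step~2 possible.
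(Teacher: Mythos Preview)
Your outline is essentially correct, but it follows the original Duyckaerts--Merle template more closely than the paper does, and in doing so it duplicates work already contained in Proposition~\ref{prop:CS}.

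The paper does \emph{not} build $W_\pm$ by a separate contraction in exponentially weighted spaces, nor does it invoke a compactness/rigidity or profile-decomposition step for trapped solutions. Instead it reads everything off the center-stable manifold of Proposition~\ref{prop:CS} together with the energy identity $E(\vec u)=J(Q)+\tfrac12\|\vec w_\infty\|_\HH^2$ proved there. At threshold this forces $\vec w_\infty=0$ for any trapped solution, so the dispersive component $w$ is completely determined by $\mu_-(0)$ through the fixed-point in Section~\ref{sec:nonlinear}. The stable set inside the threshold level thus collapses to a one-parameter family parametrized by $\mu_-(0)$: the value $\mu_-(0)=0$ gives $u\equiv Q$, and $\mu_-(0)\gtrless 0$ give $W_\mp$ up to a time shift (the sign of $K_0$ upon ejection matches $-\mu_-$). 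No weighted-in-time contraction and no separate exponential-convergence argument are needed. For untrapped solutions the paper uses the same sign argument you sketch: at $E(\vec u)=J(Q)$ one has $K_0(u(t))\neq 0$ unless $\vec u(t)=\pm(Q,0)$, so $\Sg$ is constant along the whole orbit and mixed forward/backward behavior is excluded.

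What you gain from your route is a self-contained DM-style proof that does not rely on the dispersive/Strichartz machinery of Section~3; what you lose is economy, since the exponential convergence and one-dimensionality of the threshold stable set are immediate from the energy identity once Proposition~\ref{prop:CS} is in hand. One caution: your ``compactness/rigidity (profile decomposition)'' step to deduce exponential convergence of trapped threshold solutions is the softest point of your sketch and would need to be replaced by a direct argument from the eigenmode dominance (at threshold $d_Q(\vec u)^2=k^2\la^2$, cf.~\eqref{energy dist}) to be fully rigorous.
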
 

As usual, the image of $W_{\pm}$ and $Q$ form a one-dimensional {\em stable manifold} associated with $(Q,0)$, cf.~\cite{BJ}. The {\em unstable manifold}
is obtained by time-reversal. 
Moreover, the solutions in Theorem~\ref{thm:main} which scatter to $Q$ form a  $C^1$   manifold\footnote{It is in fact smoother than $C^1$, but we do not pursue this here.} in $\HH$ of co-dimension $1$ which is
the {\em center-stable manifold} associated with $(Q,0)$. The {\em center manifold} is obtained by the transverse intersection of the two center-stable
manifolds corresponding to $t\to\pm\I$, respectively.
We remark that the case of energies $E(\vec u)<E(Q,0)$ was settled in~\cite{PS}. There it was found that one either has global existence
or blowup in both time directions. That global existence entails scattering to zero was only recently shown in~\cite{IMN}. 

The restriction $p>5$ in Theorem~\ref{thm:main} appears to be a technical one; in fact, it seems reasonable to believe that Theorem~\ref{thm:main} remains
valid in the range $p>3$, and possibly even for some $p\le3$. At $p=3$ the linearized operator $L_+$ has a threshold resonance which might affect the result. But more
importantly, in the range $p<5$ small data scattering cannot be done on the basis of Strichartz estimates alone which is of course a serious obstacle at this point. 
The case $p=5$ is perhaps accessible, but we exclude it here (as in~\cite{IMN}). 
We remark that unlike the NLS case, the hyperbolic structure underlying our proof of Theorem~\ref{thm:main} is still present for all $1<p\le 5$. 
This refers to the fact that the structure of the spectrum of the linearized form of \eqref{eq:NLKG} around $Q$ does not change significantly.  


This paper is closely related to~\cite{NakS}, and for many details in the following section we refer the reader to that paper. The main difference
lies with the construction of the center-stable manifold. The radial assumption in~\cite{NakS} was removed in~\cite{NakS3}, and we expect similarly
that one can obtain Theorem~\ref{thm:main} for all data, and not just even ones; but we do not pursue that here. 
Finally, see~\cite{NakS2} for similar results on three-dimensional cubic NLS, and \cite{KNS} for the critical wave equations in three and five dimensions.

\section{Global existence vs. finite time blowup}

In this section we recall the statement and proof of the ``nine set theorem''
from~\cite{NakS}. That theorem is weaker than Theorem~\ref{thm:main} in the sense
that ``scattering to $Q$'' is replaced by ``trapped by $Q$'', which means that
the solution remains in a small neighborhood of $Q$ for all times (positive or negative).
In the following section we shall than take up the issue of proving that trapped by~$Q$
actually implies the stronger scattering  property. This amounts to proving the existence
of a center-stable manifold as in~\cite{NakS}. It is at that point that this paper
differs most from~\cite{NakS}, due to the fact that the one-dimensional equation exhibits much less
dispersion than the three-dimensional one.  Let $\vec u=(u,\dot u)$ and set 
\EQ{ \label{def He}
 \HH^\e:=\{\vec u\in \HH=H^{1}\times L^{2} \mid E(\vec u)<E(Q,0)+\e^2, \vec u(t,x)=\vec u(t,-x)\}
 }

\begin{thm}
\label{thm:9set} 
Consider all solutions of NLKG~\eqref{eq:NLKG} with initial data $\vec u(0)\in\HH^\e$ for some small $\e>0$. The solution set is decomposed into nine non-empty sets characterized as 
\begin{enumerate}
\item Scattering to $0$ for both  $t\to\pm\I$, 
\item Finite time  blow-up on both sides $\pm t>0$, 
\item Scattering to $0$ as $t\to\I$ and finite time  blow-up in $t<0$, 
\item Finite time  blow-up in $t>0$ and scattering to $0$ as $t\to-\I$, 
\item Trapped by $\pm Q$ for $t\to\I$ and scattering to $0$ as $t\to-\I$, 
\item Scattering to $0$ as $t\to\I$ and trapped by $\pm Q$ as $t\to-\I$, 
\item Trapped by $\pm Q$ for $t\to\I$ and finite time  blow-up in $t<0$, 
\item Finite time  blow-up in $t>0$ and trapped by $\pm Q$ as $t\to-\I$, 
\item Trapped by $\pm Q$  as $t\to\pm\I$, 
\end{enumerate}
where ``trapped by $\pm Q$" means that the normalized  solution stays in a $O(\e)$ neighborhood of $ \pm Q $ forever after some time (or before some time). 
The initial data sets for (1)-(4), respectively, are open. 
\end{thm}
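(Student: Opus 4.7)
The theorem asserts a full $3\times 3$ combinatorial decomposition of $\HH^\e$ based on the trichotomy scatter-to-$0$ / trapped-by-$\pm Q$ / blow-up, separately at $t\to+\I$ and $t\to-\I$. The plan is to follow the scheme of \cite{NakS} verbatim, checking that each ingredient survives in the one-dimensional even-symmetric setting. The three ingredients are: a variational sign dichotomy valid outside a small neighborhood of $\pm (Q,0)$, an ejection lemma describing how solutions leave such a neighborhood, and a one-pass (no-return) theorem ruling out repeated visits.

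For the variational part, recall that the Payne--Sattinger type functional $K(u)=\int (|\p_x u|^2+|u|^2-|u|^{p+1})\,dx$ (or its rescaled variant $K_2$) satisfies, on the level set $\{E(\vec u)<E(Q,0)+\e^2\}$ minus a small $\HH$-ball around $\pm (Q,0)$, a uniform gap $|K(u)|\ge \kappa(\e)>0$. The sign of $K$ then separates the trajectory into the ``scattering regime'' ($K>0$, bounded $\HH$-norm, small-data theory applies once a virial controlled threshold is crossed) and the ``blow-up regime'' ($K<0$, the concavity argument à la Payne--Sattinger applies). The small-data scattering and blow-up statements for $p>5$ are exactly the ones already in hand (openness of (1)--(4) comes from local well-posedness together with these two open criteria).

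Near $\pm (Q,0)$ one writes $\vec u=\pm(Q,0)+\vec v$ and decomposes $\vec v$ into the unstable mode of the linearized operator and a symplectic-orthogonal remainder. The linearization is governed by $L_+=-\p_x^2+1-pQ^{p-1}$, whose even sector contains exactly one negative eigenvalue $-k^2$; the ground-state zero eigenvalue $Q_x$ of $L_+$ is odd and so is excluded by the evenness constraint, which in fact simplifies matters relative to \cite{NakS}. Writing $\lambda(t)=\langle \vec v(t),\Phi\rangle$ for the unstable coordinate, one obtains $\ddot\lambda=k^2\lambda+\mathcal{N}(\vec v)$ with quadratic error, and the ejection lemma states that if $|\lambda(t_0)|$ dominates the transverse part at the exit time, then $|\lambda(t)|$ grows exponentially until $\vec u(t)$ leaves the neighborhood with a definite sign of $K$. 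This step is purely ODE and the 1d analysis is identical to the 3d one.

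The genuinely nontrivial step, and the place where one has to check compatibility with the 1d setting, is the one-pass theorem: once the trajectory leaves the neighborhood of $\pm (Q,0)$, it cannot return. This is proved by a localized virial identity with a truncation $\chi(x/R(t))$ combined with the sign of $K$ and a Taylor expansion of the energy around $\pm Q$; the concavity of a Morawetz-like quantity over the re-entry interval contradicts boundedness. In 1d the weaker dispersive decay is an a priori concern, but the virial/Morawetz argument is local in space and uses only energy conservation and the nonnegativity of linearized energy transverse to the unstable mode, so it transfers without modification; the weak 1d decay will only intervene in the later section where ``trapped by $Q$'' is upgraded to ``scattering to $Q$''. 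Combining the sign dichotomy, ejection, and one-pass, every solution is classified according to its $t\to\pm\I$ behavior, giving the nine sets; non-emptiness of the two-sided trapped class (9) is obtained by a Brouwer-type mountain-pass/connectedness argument applied to a one-parameter family through the stable manifold of $Q$, while the remaining eight classes are produced by perturbing a soliton or a scattering/blow-up solution via small exponentially growing data along $\Phi$ with the appropriate sign.
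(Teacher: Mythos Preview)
Your outline follows the same architecture as the paper's proof---variational sign dichotomy via $K_0,K_2$, the ejection lemma, the one-pass theorem, Payne--Sattinger blowup for $\Sg=-1$, and scattering for $\Sg=+1$---and your observation that even symmetry removes the translation mode $Q'$ from the linearized spectrum is exactly right.

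There is, however, a substantive gap in your treatment of the scattering step. You write that in the region $K>0$ ``small-data theory applies once a virial controlled threshold is crossed,'' but this is not how scattering is obtained for large data at energies near $J(Q)$. The $\HH$-norm is indeed bounded when $\Sg=+1$, yet the solution need not ever become small in any dispersive norm; one must run the full Kenig--Merle concentration-compactness/rigidity scheme, with a Bahouri--G\'erard profile decomposition and a long-time perturbation lemma adapted to the one-dimensional Strichartz exponents. The paper imports precisely this machinery from \cite{IMN} (see the theorem in Section~2.4), and without it the $\Sg=+1$ branch of the trichotomy is not established. Likewise, the variational lower bound (Lemma~\ref{K lower bd}) in one dimension requires a concentration-compactness argument as in \cite{NakS3}, not merely the minimization characterization of~$Q$.

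A secondary point: the one-pass theorem does not transfer \emph{entirely} without modification. The virial argument needs an auxiliary result (Lemma~\ref{0freq scat} in the paper) asserting that a solution with $\int_0^2\|\p_x u\|_{L^2}^2\,dt$ small already scatters globally; this rules out the degenerate scenario in which $K_2(u(t))>0$ stays below the usable threshold on the excursion interval. Its proof uses the one-dimensional Strichartz estimate \eqref{eq:KGS} and the restriction $p\ge 5$, so it is a genuinely 1d ingredient you should flag.
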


\subsection{Hyperbolic and variational structures}

We now recall the main steps in the three-dimensional proof from~\cite{NakS}, which in essence carries over to
the one-dimensional case with only minimal changes. Next to the conserved energy $E$, we require the ``static energy'' or ``action''
\EQ{ \label{eq:stat energy}
 J(u) = \int \big[ \frac12(|\p_{x} u|^{2}   + |u|^{2}) - \frac{1}{p+1} |u|^{p+1} \big]\, dx
}
and the functionals
\EQ{
K_{0}(u) &= \lan J'(u)| u\ran = \int  (|\p_{x} u|^{2}  +|u|^{2}  -  |u|^{p+1})\, dx \\
K_{2}(u) &= \lan J'(u)| A\ran = \int  (|\p_{x} u|^{2}    - \frac{p-1}{2(p+1)} |u|^{p+1})\, dx  
}
with $A=\frac12(x\p_{x} + \p_{x}x)$ the generator of dilations.   
It is standard that $\pm Q(\cdot + x_{0})$ as in~\eqref{eq:soliton} are the only solutions of the static equation
\[
-\phi'' + \phi = |\phi|^{p-1}\phi
\]
which are in $H^{1}$, and they uniquely minimize $J(\phi)$ subject to $K_{s}(\phi)=0$, $s=0,2$. 
Setting
\EQ{ \label{decop u}
 u = \si[Q+v], \pq v=\la\r+\ga, \pq \ga\perp\r}
 where $\si=\pm$, one obtains 
 \EQ{
 E(\vec u) &= J(Q) +\frac12(\dot\la^{2}-k^{2}\la^{2})+ \frac12 (\lan L_{+} \ga|\ga\ran + \| \dot\ga\|_{2}^{2}) - C(v) 
 }
 where $C(v)=O(\|v\|_{H^{1}}^{3})$. We now define
 \EQ{
 \| v\|_{E}^{2} &:= \frac12(\dot\la^{2}+k^{2}\la^{2})+ \frac12 (\lan L_{+} \ga|\ga\ran + \| \dot\ga\|_{2}^{2})  \\
 d_{\si}(\vec u) &:= \sqrt{  \| \vec v\|_{E}^{2} - \chi(\|\vec v\|_{E}/(2\de_{E})) C(V) }
 }
where $0<\de_{E}\ll 1$ and $\chi$ is a suitable cut-off function so that 
\EQ{
  \|\vec v\|_E/2 \le d_\si(\vec u) \le 2\|\vec v\|_E,\pq d_\si(\vec u)=\|\vec v\|_E+O(\|\vec v\|_E^2),}
\EQ{ \label{energy dist}
 d_\si(\vec u)\le \de_E \implies d_\si(\vec u)^2=E(\vec u)-J(Q)+k^2\la^2.}

The {\em eigenmode dominance} and the {\em ejection process} from~\cite{NakS} carry over to this setting without changes.
For the sake of completeness, we reproduce these statements here, but refer the reader to~\cite{NakS} for the proofs. 

\begin{lem} \label{1st exit}
For any $\vec u\in\HH$ satisfying  
\EQ{
 E(\vec u)<J(Q)+d_Q(\vec u)^2/2, \pq d_Q(\vec u)\le \de_E,}
one has $d_Q(\vec u)\simeq|\la|$.  
\end{lem}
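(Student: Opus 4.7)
The plan is to combine the hypothesis $E(\vec u)<J(Q)+d_Q(\vec u)^2/2$ with the second identity in (\ref{energy dist}) and with the obvious lower bound for $\|\vec v\|_E$ coming from the $\lambda$-component of its definition. The upper bound $d_Q(\vec u)\lesssim|\lambda|$ is algebraic, and the lower bound $|\lambda|\lesssim d_Q(\vec u)$ is essentially built into the definition of $d_\sigma$.

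More precisely, I would write $d:=d_Q(\vec u)$ and first invoke the hypothesis $d\le\de_E$ so that the energy distance identity (\ref{energy dist}) applies:
\EQ{
 d^2=E(\vec u)-J(Q)+k^2\la^2.
}
The energetic assumption $E(\vec u)-J(Q)<d^2/2$ then yields
\EQ{
 d^2-k^2\la^2<\tfrac12 d^2,\qquad\text{hence}\qquad d^2<2k^2\la^2,
}
which gives the ejection-side bound $d\le\sqrt{2}\,k|\la|$.

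For the converse bound, I would use the definition
\EQ{
 \|\vec v\|_E^2=\tfrac12(\dot\la^2+k^2\la^2)+\tfrac12(\lan L_+\ga|\ga\ran+\|\dot\ga\|_2^2),
}
which trivially majorizes $\tfrac12 k^2\la^2$ (the spectral gap assumption on $L_+$ restricted to $\ga\perp\r$ ensures that the $\ga$-part of $\|\vec v\|_E^2$ is nonnegative; this is part of the setup inherited from \cite{NakS}). Combining this with the comparison $\tfrac12\|\vec v\|_E\le d_\s(\vec u)\le 2\|\vec v\|_E$ stated just above (\ref{energy dist}) gives
\EQ{
 k|\la|\lec\|\vec v\|_E\lec d,
}
which completes the equivalence $d\simeq|\la|$.

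I do not anticipate any serious obstacle here: the statement is a direct arithmetic consequence of the expansion of $E(\vec u)$ around $Q$ together with the hyperbolic sign of the $\la$ mode. The only subtle point is that one must be in the regime $d\le\de_E$ where the cutoff $\chi(\|\vec v\|_E/(2\de_E))$ in the definition of $d_\s$ equals one and (\ref{energy dist}) is indeed exact; under this smallness, the cubic remainder $C(v)$ has already been absorbed into $d_\s(\vec u)^2$, and no further perturbative estimate is required.
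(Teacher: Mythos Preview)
Your argument is correct and is precisely the standard one: the paper does not spell out a proof here but refers to \cite{NakS}, where exactly this computation---combining the identity \eqref{energy dist} with the energy hypothesis to get $d^2<2k^2\la^2$, and using $\frac12 k^2\la^2\le\|\vec v\|_E^2$ together with $\|\vec v\|_E\le 2d$ for the reverse inequality---is carried out. Your remark about the regime $d\le\de_E$ ensuring that \eqref{energy dist} holds exactly is also on point.
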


The following {\em ejection lemma} is most important in the proof of Theorem~\ref{thm:9set} and Theorem~\ref{thm:main}. It states
that a solution which penetrates a small neighborhood of $\pm(Q,0)$ deep enough (compared to $\e$) in~$\HH^{\e}$, but which is not trapped by
these equilibria, is necessary ejected from the neighborhood via the exponentially increasing mode. 
The nontrapping here is formulated via the condition $d_{Q}(\vec u(t))\ge d_{Q}(\vec u(0))$ for small $t\ge0$, cf.~\eqref{exiting condition}.

\begin{lem} \label{2nd exit}
There exists a constant $0<\de_X\le \de_E$ with the following property. Let $u(t)$ be a local solution of NLKG on an interval $[0,T]$ satisfying 
\EQ{\nn 
 R:=d_Q(\vec u(0)) \le \de_X, \pq E(\vec u)<J(Q)+R^2/2} 
and for some $t_0\in(0,T)$, 
\EQ{ \label{exiting condition}
 d_Q(\vec u(t)) \ge R \pq (0<\forall t<t_0).}
Then $d_Q(\vec u(t))$ increases monotonically until reaching $\de_X$, and meanwhile, 
\EQ{\nn 
  \pt d_Q(\vec u(t)) \simeq -\sg\la(t) \simeq -\sg\la_+(t) \simeq e^{kt}R, 
  \pr |\la_-(t)|+\|\vec \ga(t)\|_E \lec R+e^{2kt}R^2, 
  \pr \min_{s=0,2}\sg K_s(u(t)) \gec d_Q(\vec u(t)) - C_*d_Q(\vec u(0)),}
for either $\sg=+1$ or $\sg=-1$, where $C_*\ge 1$ is a constant. 
\end{lem}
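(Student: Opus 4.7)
The plan is to reduce the near-equilibrium dynamics to a scalar hyperbolic ODE for the unstable mode and close a bootstrap. Writing the NLKG equation for $v=u/\si-Q$ and projecting onto $\rho$ and $\rho^\perp$ gives
\[
 \ddot\la-k^2\la=\LR{\rho|N(v)},\pq \ddot\ga+L_+^\perp\ga=P_{\rho^\perp}N(v),
\]
with $N(v)=|Q+v|^{p-1}(Q+v)-Q^p-pQ^{p-1}v=O(v^2)$, and $L_+$ coercive on $\rho^\perp$. In the hyperbolic coordinates $\la_\pm=\frac12(\la\mp k^{-1}\dot\la)$ one obtains the first-order system $\dot\la_\pm=\pm k\la_\pm+N_\pm(v)$ with $|N_\pm|\lesssim\|\vec v\|_E^2$.

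The first task is to show that the exiting hypothesis~\eqref{exiting condition} forces an initial condition dominated by the unstable mode. Applying~\eqref{energy dist} at $t=0$ and $t\in(0,t_0)$, the assumption $d_Q(\vec u(t))\ge R$ translates to $\la(t)^2\ge\la(0)^2$ up to cubic corrections, which rules out $|\la_+(0)|\ll R$: otherwise $\la(t)\approx e^{-kt}\la_-(0)$ would drive $|\la|$ exponentially toward $0$, contradicting~\eqref{exiting condition}. Hence $|\la_+(0)|\gtrsim R$, and in the conclusion we take $\si$ so that $-\si\la_+(0)\simeq R$.

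Next, I would run a Duhamel bootstrap on the maximal interval $[0,T_*]$ where $d_Q(\vec u(t))\le\de_X$. Assuming $\|\vec v(t)\|_E\lesssim e^{kt}R$ on $[0,T_*]$, the formula
\[
 \la_+(t)=e^{kt}\la_+(0)+\int_0^t e^{k(t-s)}N_+(v(s))\,ds,
\]
together with $|N_+|\lesssim e^{2ks}R^2$, gives a Duhamel contribution of size $e^{2kt}R^2\lesssim\de_X\, e^{kt}R$, subleading to $e^{kt}|\la_+(0)|$ for $\de_X$ small; this closes the bootstrap $-\si\la_+(t)\simeq e^{kt}R$. The bounds on $\la_-$ and $\vec\ga$ then follow from the near-conservation identity
\[
 2k\la_+\la_-=k^2\la^2-\dot\la^2=\|\vec v\|_E^2-(\LR{L_+\ga|\ga}+\|\dot\ga\|_2^2)-2(E(\vec u)-J(Q))+2C(v),
\]
combined with $E(\vec u)<J(Q)+R^2/2$ and coercivity of $L_+$ on $\rho^\perp$: one obtains $|\la_-(t)|+\|\vec\ga(t)\|_E\lesssim R+e^{2kt}R^2$. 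Monotonicity of $d_Q$ on $[0,T_*]$ follows from~\eqref{energy dist} and the strict growth of $|\la|\simeq e^{kt}R$.

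For the $K_s$ bound, use the $\si$-invariance $K_s(\si w)=K_s(w)$ to write $\si K_s(u)=\si K_s(Q+v)$, then expand
\[
 \si K_s(Q+v)=\si\LR{K_s'(Q)|v}+O(\|v\|_{H^1}^2)=\si\la\LR{K_s'(Q)|\rho}+O(\|\vec\ga\|_E+\|\vec v\|_E^2).
\]
The spectral identity $\LR{K_0'(Q)|\rho}=\LR{L_+\rho|Q}=-k^2\LR{\rho|Q}<0$, together with its analogue for $s=2$, produces a leading term proportional to $-\si\la>0$ with positive coefficient; absorbing the quadratic error via $\|\vec v\|_E^2\lesssim\de_X\, d_Q(\vec u(t))$ and using $-\si\la\simeq d_Q(\vec u(t))$ then yields $\si K_s(u(t))\gtrsim d_Q(\vec u(t))-C_*R$. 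The main obstacle is the bookkeeping that keeps the quadratic Duhamel feedback for $\la_+$ strictly subleading despite its coupling to $\la_-$ and $\vec\ga$; this is what dictates the smallness of $\de_X$ relative to the spectral gap of $L_+$ and the nondegeneracy constants of $K_s'(Q)$.
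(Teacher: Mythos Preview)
The paper does not prove this lemma; it reproduces the statement from \cite{NakS} and refers the reader there. Your outline follows the same route as \cite{NakS}: split $v$ into hyperbolic modes $\la_\pm$ and dispersive remainder $\ga$, use the exact relation \eqref{energy dist} together with eigenmode dominance (Lemma~\ref{1st exit}) and the exiting hypothesis to force $|\la_+(0)|\simeq R$, run a Duhamel bootstrap on $\la_+$ while $d_Q\le\de_X$, and finish with the Taylor expansion of $K_s$ around $Q$. So the architecture is correct and matches the reference.

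One step needs repair. The displayed ``near-conservation identity'' is a single scalar relation (and its algebra is off by a factor of $2k$: one has $4k^2\la_+\la_-=k^2\la^2-\dot\la^2$); by itself it cannot control \emph{both} $|\la_-|$ and $\|\vec\ga\|_E$. In \cite{NakS} these bounds come from the equations you have already written down, not from the energy identity: integrate the stable-mode ODE $\dot\la_-=-k\la_-+N_-(v)$ with $|\la_-(0)|\lec R$ (which follows from $\|\vec v(0)\|_E\simeq R$) and $|N_-|\lec e^{2ks}R^2$ under the bootstrap to get $|\la_-(t)|\lec R+e^{2kt}R^2$; and use the energy estimate on the $\ga$-equation, $\tfrac{d}{dt}\|\vec\ga\|_E\lec\|N(v)\|_{L^2}\lec e^{2kt}R^2$, to get the same bound for $\|\vec\ga\|_E$. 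With these in hand the bootstrap $\|\vec v\|_E\lec e^{kt}R$ closes as you indicate.

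Two minor remarks. The cleanest way to extract $|\la_+(0)|\simeq R$ from \eqref{exiting condition} is to differentiate \eqref{energy dist}: since $E$ is conserved, $\tfrac{d}{dt}d_Q^2=2k^2\la\dot\la=2k^3(\la_+^2-\la_-^2)\ge 0$ at $t=0$, whence $|\la_+(0)|\ge|\la_-(0)|$; combined with $\la_+^2+\la_-^2\lec\|\vec v(0)\|_E^2\simeq R^2$ and $|\la(0)|\simeq R$ this gives $|\la_+(0)|\simeq R$ directly. Finally, keep the decomposition sign $\si$ in $u=\si(Q+v)$ (fixed by proximity to $\pm Q$) separate from the ejection sign $\sg:=-\sign\la_+(0)$ in the conclusion; your write-up uses the same symbol for both.
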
 

One point in which the proofs depend on the dimension and the power of the nonlinearity is the linearization of the $K$ functionals.
Here they are
\EQ{
K_{0}(Q+v) &= -(p-1) \lan Q^{p} |v\ran + O(\|v\|_{H^{1}}^{2}) \\
K_{2}(Q+v) &= - \lan \frac{p-5}{2} Q^{p} + 2 Q|v\ran + O(\|v\|_{H^{1}}^{2})
}

Next, we require the following variational lower bounds, which are proved as in dimension three (but one needs to use the concentration compactness principle
as in~\cite{NakS3}).

\begin{lem} \label{K lower bd}
For any $\de>0$, there exist $\e_0(\de), \ka_0, \ka_1(\de)>0$ such that for any $\vec u\in\HH$ satisfying 
\EQ{ \label{energy region}
 E(\vec u)< J(Q)+\e_0(\de)^2, \pq d_Q(\vec u) \ge \de,}
one has either
\EQ{ \label{-K bd}
 K_0(u) \le -\ka_1(\de) \pq and \pq K_2(u) \le -\ka_1(\de),}
or 
\EQ{ \label{+K bd}
 K_0(u) \ge \min(\ka_1(\de), \ka_0\|u\|_{H^1}^2) \pq and \pq K_2(u) \ge \min(\ka_1(\de),\ka_0\| \p_{x}u\|_{L^2}^2).}
\end{lem}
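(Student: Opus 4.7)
I will sketch a proof by contradiction via a concentration-compactness / profile decomposition argument, in the spirit of~\cite{NakS3} adapted to the one-dimensional even setting.

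Suppose the conclusion fails for some $\de>0$. Then there exist sequences $\e_n\to 0$ and even $\vec u_n\in\HH^{\e_n}$ with $d_Q(\vec u_n)\ge\de$, for which neither \eqref{-K bd} nor \eqref{+K bd} holds with a uniform positive $\ka_1$. From $E(\vec u_n)=J(u_n)+\tfrac12\|\dot u_n\|_{L^2}^2<J(Q)+\e_n^2$ one extracts a subsequence with $\|\dot u_n\|_{L^2}\to 0$ and $J(u_n)\to J_*\le J(Q)$. A short case analysis of how the two alternatives can fail produces, for some $s\in\{0,2\}$, either $K_s(u_n)\to 0$ from below (ruling out the uniform negativity in \eqref{-K bd}), or $K_s(u_n)>0$ but with $K_s(u_n)$ not exceeding $\ka_0$ times the appropriate norm squared for arbitrarily small $\ka_0$; in the latter case one arranges, after normalization, the same vanishing $K_s(u_n)\to 0$ at the limit.

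The key step is then to apply a profile decomposition to $u_n$ in $H^1(\R)$. Evenness of $u_n$ forces the translation centers of the nontrivial profiles either to vanish (giving a centered, even profile) or to come in symmetric pairs $\{x_n^j,-x_n^j\}$ with $|x_n^j|\to\I$. The asymptotic orthogonality of the decomposition for $\|\cdot\|_{H^1}^2$, $\|\cdot\|_{L^{p+1}}^{p+1}$, $J$, and $K_s$, combined with $J(u_n)\to J_*\le J(Q)$, $K_s(u_n)\to 0$, and the variational characterization of $\pm Q$ as the (essentially) unique $H^1$-minimizers of $J$ on $\{\phi\ne 0:K_s(\phi)\le 0\}$, forces all but a single centered profile to vanish in the limit and that remaining profile to equal $\pm Q$. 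Consequently $u_n\to\pm Q$ in $H^1$ along the subsequence; together with $\|\dot u_n\|_{L^2}\to 0$ this gives $\vec u_n\to\pm(Q,0)$ in $\HH$, contradicting $d_Q(\vec u_n)\ge\de$. For the ``positive but non-dominant'' scenario ($K_0(u_n)<\ka_0\|u_n\|_{H^1}^2$, or the $K_2$ analog), the Sobolev embedding $\|u\|_{L^{p+1}}^{p+1}\lec\|u\|_{H^1}^{p+1}$ yields $\|u_n\|_{H^1}\gec 1$, guaranteeing a nontrivial limit profile; the linearizations $K_0(Q+v)$ and $K_2(Q+v)$ displayed just above let one pass between the two functionals after composing with a dilation $u^\mu(x)=u(x/\mu)$.

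The main obstacle is precisely the one flagged in the paper: the absence of compact Sobolev embeddings on $\R$, which forces one to use a profile decomposition rather than a direct weak/strong convergence argument. Evenness only encodes a $\Z/2$ symmetry and does not by itself restore compactness under arbitrary translations, in contrast to radial symmetry in higher dimensions. What saves the argument is the threshold nature of $J(Q)$: the ground-state characterization together with the energy budget $J(Q)+O(\e_n^2)$ is tight enough to rule out any symmetric pair of escaping profiles and to isolate a single centered profile to which the standard variational analysis applies.
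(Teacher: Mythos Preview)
Your approach matches the paper's: it does not prove this lemma in detail but defers to the three-dimensional argument in~\cite{NakS}, noting only that in one dimension one must use the concentration-compactness principle as in~\cite{NakS3}---precisely the profile-decomposition route you outline, with evenness forcing the translation centers of the profiles either to vanish or to come in symmetric pairs that the energy budget $J(Q)$ cannot accommodate. One small point of order: $\|\dot u_n\|_{L^2}\to 0$ is not available at the outset from the energy bound alone (a priori $J(u_n)$ could sit well below $J(Q)$); it follows only \emph{after} the profile decomposition yields $u_n\to\pm Q$ in $H^1$, whence $J(u_n)\to J(Q)$ and the constraint $J(u_n)+\tfrac12\|\dot u_n\|_{L^2}^2<J(Q)+\e_n^2$ then forces $\dot u_n\to 0$, completing the contradiction with $d_Q(\vec u_n)\ge\de$.
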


Combining the signs of $\la$ and $K_{s}$ one now obtains a global sign functional.

\begin{lem} \label{sign}
Let $\de_S:=\de_X/(2C_*)>0$ where $\de_X$ and $C_*\ge 1$ are constants from Lemma \ref{2nd exit}. Let $0<\de\le\de_S$ and 
\EQ{ \label{def HS}
 \HH_{(\de)}:=\{\vec u\in\HH \mid E(\vec u)<J(Q)+\min(d_Q(\vec u)^2/2,\e_0(\de)^2)\},} 
where $\e_0(\de)$ is given by Lemma \ref{K lower bd}. Then there exists a unique continuous function $\Sg:\HH_{(\de)}\to\{\pm 1\}$ satisfying 
\EQ{ \label{def Sg}
 \CAS{\vec u\in\HH_{(\de)},\ d_Q(\vec u)\le\de_E &\implies  \Sg(\vec u)=-\sign\la,\\
 \vec u\in\HH_{(\de)},\ d_Q(\vec u)\ge\de &\implies \Sg(\vec u)=\sign K_0(u)=\sign K_2(u),}}
where we set $\sign 0=+1$ (a convention for the case $u=0$). 
\end{lem}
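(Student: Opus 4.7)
The plan is to define $\Sg$ piecewise on the closed cover $A := \{\vec u\in\HH_{(\de)} : d_Q(\vec u)\le\de_E\}$ and $B := \{\vec u\in\HH_{(\de)} : d_Q(\vec u)\ge\de\}$, whose union is $\HH_{(\de)}$ since $\de\le\de_S\le\de_E$, declaring $\Sg := -\sign\la$ on $A$ and $\Sg := \sign K_0(u)$ on $B$. On $A$, the defining condition of $\HH_{(\de)}$ gives $E(\vec u)<J(Q)+d_Q(\vec u)^2/2$, which together with $d_Q\le\de_E$ invokes Lemma~\ref{1st exit} and yields $d_Q(\vec u)\simeq|\la|$. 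The points $\pm(Q,0)$ are ruled out by the strict energy inequality, so $d_Q>0$ on $\HH_{(\de)}$, forcing $\la\ne 0$; combined with the continuous dependence $\vec u\mapsto\la$ via the spectral projection onto $\rho$, this shows $-\sign\la\in\{\pm 1\}$ is continuous on $A$. On $B$, the condition $E(\vec u)<J(Q)+\e_0(\de)^2$ together with $d_Q\ge\de$ invokes Lemma~\ref{K lower bd}, which forces $K_0(u)$ and $K_2(u)$ to share the same sign and be uniformly bounded away from zero except at $u\equiv 0$ (handled by the convention $\sign 0=+1$), so $\sign K_0=\sign K_2$ is continuous on $B$.

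The key step is verifying that the two definitions agree on the overlap $A\cap B=\{\de\le d_Q\le\de_E\}\cap\HH_{(\de)}$. Here both $\la$ and $K_0$ are continuous and nowhere vanishing, so $(\sign K_0)(\sign\la)\in\{\pm 1\}$ is locally constant. Using the chart $u=\si(Q+v)$, $v=\la\rho+\ga$, the overlap splits into components indexed by $(\si,\sign\la)\in\{\pm 1\}^2$, each of which is path-connected: the constraints $|\la|\simeq d_Q\in[\de,\de_E]$ and $\|\ga\|_{H^1}+|\dot\la|+\|\dot\ga\|_{L^2}\lec d_Q$, extractable from \eqref{energy dist} together with coercivity of $L_+$ on $\rho^\perp$, carve out a connected region in parameter space. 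In each component I verify $\sign K_0=-\sign\la$ at the reference point $\vec u_0:=(\si(Q+\la_0\rho),0)$ with $|\la_0|$ chosen so that $d_\si(\vec u_0)\in[\de,\de_E]$. The linearization $K_0(Q+\la_0\rho)=-(p-1)\la_0\lan Q^p|\rho\ran+O(\la_0^2)$, combined with the identity $\lan Q^p|\rho\ran=k^2(p-1)^{-1}\lan Q|\rho\ran>0$ obtained by pairing $L_+Q=-(p-1)Q^p$ with $L_+\rho=-k^2\rho$, yields $\sign K_0(\vec u_0)=-\sign\la_0$ once $\de_E$ is small enough that the quadratic remainder is dominated by the linear term. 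An identical computation works for $K_2$, using $\lan\tfrac{p-5}{2}Q^p+2Q|\rho\ran>0$ for $p>5$.

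Continuity of the glued $\Sg:\HH_{(\de)}\to\{\pm 1\}$ follows from the pasting lemma on the finite closed cover $\{A,B\}$ with matching values on $A\cap B$, and uniqueness is automatic since the two prescriptions determine $\Sg$ on each piece. The main obstacle lies in the overlap check: the naive linearization $K_0(Q+v)=-(p-1)\lan Q^p|v\ran+O(\|v\|_{H^1}^2)$ has a cross-term $\lan Q^p|\ga\ran=O(\|\ga\|_{L^2})$ which, owing to $\|\ga\|_E\lec|\la|$, competes at the same order as the leading $\la$-contribution; hence a direct sign estimate is insufficient, and one must instead rely on the non-vanishing of $K_0$ and $\la$ throughout the overlap, combined with connectedness of its components, to propagate the sign identity from a single reference point per component.
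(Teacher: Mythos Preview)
The paper itself does not prove this lemma, deferring instead to \cite{NakS}. Your overall strategy---defining $\Sg$ piecewise on the closed cover $\{A,B\}$ and checking consistency on the overlap via a reference-point computation---is sound, and your identification of the obstruction (the cross-term $\lan Q^p|\ga\ran$ competing at the same order as the $\la$-term) is exactly right. However, the argument has a genuine gap at the connectedness step. You assert that each $(\si,\sign\la)$-piece of $A\cap B$ is path-connected because the constraints ``carve out a connected region in parameter space,'' but this is not justified: the region is cut out by the nonlinear energy constraint $E(\vec u)<J(Q)+\min(d_Q^2/2,\e_0^2)$ together with $d_Q\in[\de,\de_E]$, and knowing merely that $|\la|\simeq d_Q$ and $\|\ga\|_{H^1}+|\dot\la|+\|\dot\ga\|\lec d_Q$ does not imply connectedness. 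Concretely, the naive path $s\mapsto(\la,(1-s)\dot\la,(1-s)\ga,(1-s)\dot\ga)$ from a general point to your reference point may drive $d_\si$ out of $[\de,\de_E]$ (at $s=1$ one has $d_\si\approx k|\la|/\sqrt 2$, which need not match the original $d_Q$), exiting $B$ and hence losing the guarantee $K_0\ne0$ from Lemma~\ref{K lower bd}.

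One way to repair this is static: parametrize by $(\mu,e):=(k^2\la^2,\,E-J(Q))$ and observe via \eqref{energy dist} that the projection of the overlap onto $(\mu,e)$-space is the convex region $\{e<\mu,\ e<\e_0(\de)^2,\ \de^2\le e+\mu\le\de_E^2\}$, while each fiber is (up to the $O(\|v\|^3)$ correction $C(v)$) a Hilbert sphere in $(\dot\la,\ga,\dot\ga)$, hence connected; this yields connectedness of each $(\si,\sign\la)$-piece and your reference-point check then finishes the proof. Alternatively---and this is closer in spirit to \cite{NakS}---one argues dynamically: run the NLKG flow from $\vec u\in A\cap B$ in the direction of nondecreasing $d_Q$ and invoke Lemma~\ref{2nd exit}. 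Along the resulting trajectory both $\sign\la$ and $\sign K_0$ remain constant (each being nonzero throughout, by Lemmas~\ref{1st exit} and~\ref{K lower bd}), while the ejection estimate forces $\sg K_0(u(t))>0$ once $d_Q(\vec u(t))>C_*d_Q(\vec u(0))$; the very definition $\de_S=\de_X/(2C_*)$ is designed so that this threshold is crossed before exit. Either route closes the gap, but as written your proof does not.
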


The free energy is uniformly bounded in the region $\Sg=1$. More precisely, one has the following result. 

\begin{lem} \label{Sg+ eng bd}
There exists $M_*\sim J(Q)^{1/2}$ such that for any $\vec u\in\HH_{(\de_S)}$ satisfying $\Sg(\vec u)=+1$ we have $\|\vec u\|_\HH \le M_*$. 
\end{lem}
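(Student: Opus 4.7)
The plan is to split the region $\{\vec u \in \HH_{(\de_S)} : \Sg(\vec u) = +1\}$ according to the two alternatives in the definition \eqref{def Sg} of $\Sg$, namely $d_Q(\vec u) \le \de_E$ versus $d_Q(\vec u) \ge \de_S$ (these overlap in the annulus $\de_S \le d_Q \le \de_E$, which is harmless), and to establish $\|\vec u\|_\HH \lec J(Q)^{1/2}$ separately in each regime.

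In the near-soliton regime $d_Q(\vec u) \le \de_E$, I would use the decomposition \eqref{decop u} together with the equivalence $\|\vec v\|_E \simeq \|\vec v\|_\HH$, which follows from $k^2 \la^2 \ge 0$ plus coercivity of $L_+$ on the spectral complement of $\r$. Combined with $\|\vec v\|_E \le 2 d_Q(\vec u) \le 2\de_E$, this yields $\|\vec u\|_\HH \le \|Q\|_{H^1} + C\de_E$; the Pohozaev identity $J(Q) = \frac{p-1}{2(p+1)} \|Q\|_{H^1}^2$ (coming from $K_0(Q)=0$) then gives $\|Q\|_{H^1} \simeq J(Q)^{1/2}$, so this case is settled.

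In the far regime $d_Q(\vec u) \ge \de_S$, the hypothesis $\Sg(\vec u) = +1$ forces $K_0(u) \ge 0$ through Lemma~\ref{K lower bd}, which reads $\|u\|_{L^{p+1}}^{p+1} \le \|u\|_{H^1}^2 \le \|\vec u\|_\HH^2$. Substituting this into the energy,
\[
 E(\vec u) \;=\; \tfrac12 \|\vec u\|_\HH^2 - \tfrac{1}{p+1}\|u\|_{L^{p+1}}^{p+1} \;\ge\; \tfrac{p-1}{2(p+1)} \|\vec u\|_\HH^2,
\]
and combining with the defining inequality $E(\vec u) < J(Q) + \e_0(\de_S)^2$ of $\HH_{(\de_S)}$ produces $\|\vec u\|_\HH^2 \lec J(Q) + \e_0(\de_S)^2 \lec J(Q)$, after shrinking $\e_0(\de_S)$ if necessary so that $\e_0(\de_S) \lec J(Q)^{1/2}$.

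I do not foresee any real obstacle here: the estimate is the direct payoff of the variational dichotomy of Lemma~\ref{K lower bd}, designed so that the $K_0 \ge 0$ branch (picked out precisely by $\Sg = +1$ in the non-trapped regime) allows the nonlinear piece of the energy to be absorbed into the quadratic part. The extremal case for $M_*$ is visibly $\vec u = (Q,0)$ itself, which confirms the scaling $M_* \sim J(Q)^{1/2}$.
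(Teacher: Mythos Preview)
Your argument is correct and is exactly the standard one. The paper itself does not prove this lemma here but refers to~\cite{NakS}; the proof there proceeds precisely as you do: when $d_Q(\vec u)\ge\de_S$ the condition $\Sg=+1$ forces $K_0(u)\ge 0$, whence $E(\vec u)\ge\frac{p-1}{2(p+1)}\|\vec u\|_\HH^2$ and the energy bound gives $\|\vec u\|_\HH\lec J(Q)^{1/2}$, while in the near regime $d_Q(\vec u)\le\de_E$ the bound is immediate from $\|\vec v\|_E\simeq\|\vec v\|_\HH\lec\de_E$ and $\|Q\|_{H^1}\simeq J(Q)^{1/2}$.
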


For the proofs of these statements, see~\cite{NakS}.

\subsection{Absence of almost homoclinic orbits}

In the section, we recall the {\em one-pass} theorem from~\cite{NakS}. We present the proof of this result for the
sake of completeness. We begin with the following result on global scattering which excludes the degenerate case of $K_{2}(u(t))>0$ 
being to small for too long. 

\begin{lem} \label{0freq scat}
For any $M>0$, there exists $\mu_0(M)>0$ with the following property. Let $u(t)$ be a finite energy solution of NLKG \eqref{eq:NLKG} on $[0,2]$ satisfying 
\EQ{ \label{low freq conc} 
 \|\vec u\|_{L^\I_t(0,2;\HH)} \le M, \pq 
 \int_0^2 \|\p_{x} u(t)\|_{L^2}^2 \, dt \le \mu^2}
for some $\mu\in(0,\mu_0]$. Then $u$ extends to a global solution and scatters to $0$ as $t\to\pm\I$, and moreover $\|u(t)\|_{L^p_tL^{2p}_x(\R\times\R^3)}\ll\mu^{\kappa}$
for some constant $\kappa=\kappa(p)>0$. 
\end{lem}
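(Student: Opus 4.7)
The plan is to exploit the $L^2_t$ smallness of $\|\p_x u\|_{L^2}$ to locate a time slice on which the data is essentially low-frequency, and then to propagate the resulting smallness globally through the small-data Strichartz theory for the one-dimensional Klein-Gordon equation.

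First, by Chebyshev in $t$ on the second hypothesis, there exists $t_\star\in(0,2)$ with $\|\p_x u(t_\star)\|_{L^2}\le \mu/\sqrt{2}$. Combined with the hypothesis $\|\vec u(t_\star)\|_{\HH}\le M$, this gives data whose gradient part is small (of order $\mu$) while the $L^2$ mass is only controlled by $M$. In one dimension, Gagliardo--Nirenberg yields
\begin{equation*}
\|u(t_\star)\|_{L^q_x} \lesssim \|u(t_\star)\|_{L^2}^{1/2+1/q}\,\|\p_x u(t_\star)\|_{L^2}^{1/2-1/q} \lesssim M^{1/2+1/q}\,\mu^{1/2-1/q}
\end{equation*}
for every $q\in(2,\infty]$, producing quantitative smallness in all super-quadratic spatial norms at $t_\star$.

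Second, I would feed these bounds into the Strichartz estimates for the free Klein-Gordon propagator $S(t)$ on $\R$, combined with a frequency decomposition of $\vec u(t_\star)$. The low-frequency component is controlled via the small $L^q_x$ norms from the previous step, while the high-frequency component is controlled via the $\mu$-smallness of $\p_x u(t_\star)$; the $L^2$-piece of the velocity $\dot u(t_\star)$ enters only through its high-frequency part, since free KG evolution of a purely low-frequency velocity disperses via the Schr\"odinger-like regime of KG. The outcome I expect is a bound
\begin{equation*}
\|S(\cdot-t_\star)\vec u(t_\star)\|_{L^p_tL^{2p}_x(\R\times\R)} \lesssim M^{1-\sigma}\,\mu^{\sigma}
\end{equation*}
for some $\sigma=\sigma(p)\in(0,1)$, so this linear part is genuinely small whenever $\mu$ is. A standard bootstrap in $\|u\|_{L^p_tL^{2p}_x(I\times\R)}$ on nested intervals $I\ni t_\star$, combined with the nonlinear Strichartz estimate $\||u|^{p-1}u\|_{L^{p'}_tL^{(2p)'}_x}\lesssim \|u\|_{L^p_tL^{2p}_x}^{p}$ and the Duhamel formula, then closes for $\mu_0(M)$ sufficiently small, yielding global existence on $\R$, scattering to zero as $t\to\pm\I$, and the quantitative bound $\|u\|_{L^p_tL^{2p}_x(\R\times\R)}\ll \mu^{\kappa}$ with $\kappa=\sigma$.

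The main obstacle I anticipate is the linear Strichartz bound in the second step: the one-dimensional massive Klein-Gordon equation has genuinely weak dispersion, with pointwise decay $t^{-1/2}$ for low frequencies degenerating to only $t^{-1/3}$ near the characteristic frequencies, and converting the $L^2$-smallness of $\p_x u(t_\star)$ into smallness of a scale-critical space-time norm requires a careful frequency-localized argument. This is exactly where the restriction $p>5$ is used to produce the margin of integrability that closes the bootstrap. Once the linear estimate is in place, the nonlinear continuity argument and the passage from scattering forward in time to global scattering follow standard lines and mirror the three-dimensional treatment in~\cite{NakS}.
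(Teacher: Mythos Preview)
Your single-time-slice approach has a genuine gap in the treatment of the velocity $\dot u(t_\star)$. At the chosen time you control $\|\partial_x u(t_\star)\|_{L^2}\lesssim\mu$, which by interpolation gives smallness of $\|u(t_\star)\|_{\dot H^s}$ for $0<s<1$. But the linear Strichartz bound you need,
\[
\|S(\cdot-t_\star)\vec u(t_\star)\|_{L^p_tL^{2p}_x}\lesssim\sum_\pm\|v_\pm\|_{\dot H^\beta\cap\dot H^{\frac12+\frac{1}{2p}}},\qquad v_\pm=\tfrac12\bigl[u(t_\star)\mp i\langle\nabla\rangle^{-1}\dot u(t_\star)\bigr],
\]
also requires smallness of $\|\langle\nabla\rangle^{-1}\dot u(t_\star)\|_{\dot H^s}$, and the single-time information gives none. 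Your low-frequency intuition is correct (velocity at $|\xi|\ll 1$ contributes $O(|\xi|^\beta M)$), and high frequencies $|\xi|\gg 1$ are likewise harmless since $|\xi|^s\langle\xi\rangle^{-1}\to 0$. But at the unit scale $|\xi|\sim 1$ the multiplier $|\xi|^s\langle\xi\rangle^{-1}$ is of order one, so a velocity with $L^2$ mass $\sim M$ concentrated there produces a Strichartz contribution of order $M$, not $\mu^\sigma$. Nothing in your extracted data rules this out: the pair $(u(t_\star),\dot u(t_\star))=(0,g)$ with $\hat g$ supported at $|\xi|\sim 1$ and $\|g\|_{L^2}=M$ satisfies $\|\partial_x u(t_\star)\|_{L^2}=0$ yet generates a free evolution with $L^p_tL^{2p}_x$ norm $\sim M$.

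The paper avoids this precisely by \emph{not} collapsing to a single time. It first approximates $u$ by the free solution $v=e^{i\langle\nabla\rangle t}v_++e^{-i\langle\nabla\rangle t}v_-$ on $[0,2]$ (Duhamel error $O(\mu^2M^{p-2})$), and then computes the full time integral via Plancherel:
\[
\int_0^2\|\partial_x v(t)\|_{L^2}^2\,dt=C\int|\xi|^2\Bigl[2|\hat v_+|^2+2|\hat v_-|^2+\Im\bigl\{\langle\xi\rangle^{-1}(e^{4i\langle\xi\rangle}-1)\hat v_+\overline{\hat v_-}\bigr\}\Bigr]\,d\xi.
\]
Because $\langle\xi\rangle\ge 1$, the cross term is strictly dominated by the diagonal, yielding $\|\partial_x v_+\|_{L^2}^2+\|\partial_x v_-\|_{L^2}^2\lesssim\mu^2$. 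This is exactly the missing smallness of the velocity component $\langle\nabla\rangle^{-1}\dot u(0)$ in $\dot H^1$; interpolation with the $L^2$ bound $M$ then gives $\|v_\pm\|_{\dot H^s}\ll\mu^\kappa$ for the relevant $s\in(0,1)$, and the global contraction in $L^p_tL^{2p}_x$ closes. The time averaging over an interval of length comparable to the oscillation period of $e^{2i\langle\xi\rangle t}$ is the essential mechanism that your Chebyshev step discards.
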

\begin{proof}
First we see that $u$ can be approximated by the free solution 
\EQ{\nn 
 v(t):=e^{i\LR{\na}t}v_+ + e^{-i\LR{\na}t}v_-,
 \pq v_\pm:=[u(0)\mp i\LR{\na}^{-1}\dot u(0)]/2.}
This follows  simply from the Duhamel formula 
\EQ{\nn 
 \|v-u\|_{L^\I_tH^1_x(0,2)} \pt\lec \|\, |u|^{p-1}u \, \|_{L^1_t((0,2),L^2_x)}
 \lec \|u\|_{L^p_tL^{2p}_x}^p 
 \pr\lec \|\p_{x} u\|_{ L^2_t((0,2),L^2_x)}^2 \| u\|_{L^\I_t((0,2),\HH)}^{p-2} \le \mu^2 M^{p-2} \ll \mu,}
if $\mu_0 M^{p-2}\ll 1$, where we used H\"older's inequality, Sobolev embedding, and $p\ge5$. In particular, 
\EQ{\nn 
 4\mu^2 \pt\ge \int_0^2\|\p_{x} v(t)\|_{L^2_x}^2 \, dt
 \pr=C\int|\x|^2\left[2|\hat v_+|^2+2|\hat v_-|^2+\Im\{\LR{\x}^{-1}(e^{4i\LR{\x}}-1)\hat v_+\bar{\hat v_-}\}\right]\, d\x
 \pr\gec\|\p_{x} v_+\|_{L^2}^2+\|\p_{x} v_-\|_{L^2}^2,}
where $\hat v$ denotes the Fourier transform in $x$ of $v$. 
Now we use the Strichartz estimate for the free Klein-Gordon equation
\EQ{\label{eq:KGS}
 \|e^{\pm i\LR{\na}t}\fy\|_{  L^{p}_{t} B^{\al}_{q,2} } \lec \|\fy\|_{H^1_x}}
 with $\al=1-\frac{3}{p}$, $\frac{1}{q}=\frac12-\frac{2}{p}$, 
see~\eqref{eq:STR2} below. 
Combining it with Sobolev, we obtain with $p>5$ fixed, $\al, q$ as in~\eqref{eq:KGS}, and $\beta=\frac12(1-\frac{5}{p})>0$, 
\EQ{\nn 
 \|v\|_{L^p_tL^{2p}_x(\R\times\R)} \pt\lec \|v\|_{L^p_t\dot B^{\beta}_{q,2}(\R\times\R)} \lec \sum_\pm\|v_\pm\|_{\dot H^{\beta}\cap\dot H^{1-(\al-\beta)}}
   \ll \mu^{\kappa},}
if $\mu_0$ is sufficiently small depending on~$M$. Therefore,  we can identify $u$ as the fixed point for the iteration in the global Strichartz norm 
\EQ{\nn 
 \|u\|_{L^\I_t H^1_x(\R\times\R)} \lec M, \pq \|u\|_{L^p_tL^{2p}_x(\R\times\R)} \ll \mu^{\kappa},}
which automatically scatters. 
\end{proof}

Using the constants in Lemmas \ref{2nd exit}--\ref{0freq scat}, we choose $\e_*,\de_*,R_*,\mu>0$ such that 
\EQ{ \label{choice e R*}
 \pt \de_*\le\de_S,\pq \de_*\ll \de_X, \pq \e_*\le\e_0(\de_*), 
 \pr \e_* \ll R_*\ll\min(\de_*,\ka_1(\de_*)^{1/2},\ka_0^{1/2}\mu,J(Q)^{1/2}),}
\EQ{ \label{choice mu}
 \mu<\mu_0(M_*), \pq \mu^{\kappa}\ll J(Q)^{1/2}.} 

Suppose that a solution $u(t)$ on the maximal existence interval $I\subset\R$ satisfies for some $\e\in(0,\e_*]$, $R\in(2\e,R_*],$ and $\t_1<\t_2<\t_3\in I$, 
\EQ{\nn 
  E(\vec u)< J(Q)+\e^2, \pq d_Q(\vec u(\t_1))<R<d_Q(\vec u(\t_2))>R> d_Q(\vec u(\t_3)).}
Then there exist $T_1\in(\t_1,\t_2)$ and $T_2\in(\t_2,\t_3)$ such that 
\EQ{\nn 
 \pt d_Q(\vec u(T_1))=R=d_Q(\vec u(T_2))\le d_Q(\vec u(t))\pq (T_1<t<T_2).}
Lemma \ref{sign} gives us a fixed sign 
\EQ{
 \{\pm 1\}\ni \sg:=\Sg(u(t)) \pq (T_1<t<T_2).}
Based on the exact same virial identity as in \cite{NakS}, viz. 
\EQ{
 V_w(t):=\LR{wu_t|(x\na+\na x)u}, \pq \dot V_w(t)=-K_2(u(t)) +\text{error}
}
with some suitable time-dependent cut-off $w$ (for which the error is controlled by the energy
outside of some space-time rhombus)  we now obtain the following result. 

\begin{thm}[One-pass theorem] \label{no homo}
Let $\e_*,R_*>0$ be as in \eqref{choice e R*}. If a solution $u$ of NLKG on an interval $I$ satisfies for some $\e\in(0,\e_*]$, $R\in(2\e,R_*]$, and $\t_1<\t_2\in I$, 
\EQ{\nn
 E(\vec u)< J(Q)+\e^2, \pq d_Q(\vec u(\t_1))<R=d_Q(\vec u(\t_2)),}
then for all $t\in (\t_2,\I)\cap I=:I'$, we have $d_Q(\vec u(t))> R$. 
\end{thm}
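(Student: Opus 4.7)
My plan is to argue by contradiction using the virial identity $\dot V_w(t) = -K_2(u(t)) + \err(t)$ introduced just before the statement, combined with the sign-trap (Lemma \ref{sign}) and the variational and ejection lower bounds (Lemmas \ref{K lower bd} and \ref{2nd exit}).

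Suppose the conclusion fails: there exists $\tau_3\in I'$ with $d_Q(\vec u(\tau_3)) \le R$. Combining this with $d_Q(\vec u(\tau_2)) = R$ and continuity of $t\mapsto d_Q(\vec u(t))$, one extracts $T_1\in[\tau_1,\tau_2]$ and $T_2\in(\tau_2,\tau_3]$ with
\[
 d_Q(\vec u(T_1)) = R = d_Q(\vec u(T_2)), \qquad d_Q(\vec u(t))\ge R \ \text{on}\ [T_1,T_2].
\]
By \eqref{choice e R*} one has $R\le R_*\le\delta_S$, so Lemma \ref{sign} with $\delta = R$ supplies a constant sign $\sigma := \Sg(\vec u(t))\in\{\pm1\}$ on $[T_1,T_2]$, while Lemma \ref{K lower bd} gives, on the whole interval, either $-\sigma K_2(u(t))\ge \kappa_1(R)$ (``negative'' branch) or $\sigma K_2(u(t))\ge \min(\kappa_1(R),\kappa_0\|\partial_x u(t)\|_{L^2}^2)$ (``positive'' branch). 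In the positive branch, $\sigma = +1$ forces $\|\vec u\|_\HH \le M_*$ by Lemma \ref{Sg+ eng bd}; moreover, if $\kappa_0\|\partial_x u\|_{L^2}^2$ were ever smaller than $\kappa_1(R)$ on a sub-interval of length $2$, Lemma \ref{0freq scat} (applicable via \eqref{choice mu}) would produce global scattering to zero, contradicting $d_Q(\vec u) \ge R > 2\eps$. Hence $|K_2(u(t))| \gtrsim \kappa_1(R)$ throughout $[T_1,T_2]$.

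Next I would integrate the virial identity over $[T_1,T_2]$. Decomposing $u(T_i) = \sigma Q + v_i$ with $\|v_i\|_{H^1}, \|\dot v_i\|_{L^2}\lesssim R$, and exploiting that $\dot Q = 0$ and that $Q$ is exponentially localized, one obtains $|V_w(T_i)|\lesssim R$ uniformly in the spatial extent of the space-time rhombus $\mathcal R$ supporting $w$. Applying the ejection lemma forward from $T_1$ and backward from $T_2$ shows that on a sub-interval of $[T_1,T_2]$ of length $\gtrsim k^{-1}\log(\delta_*/R)$ one in fact has $d_Q(\vec u(t))\ge \delta_*$, upgrading the lower bound there to $|K_2(u(t))|\gtrsim \kappa_1(\delta_*)$, a constant independent of $R$. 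Integration thus yields
\[
 \kappa_1(\delta_*)\,\log(\delta_*/R) \;\lesssim\; R \;+\; \Bigl|\int_{T_1}^{T_2}\err(t)\,dt\Bigr|.
\]

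The hard part is controlling the error term, whose structure (as in \cite{NakS}) reduces it to a local energy integral of $u$ on the lateral exterior of $\mathcal R$. I would center $\mathcal R$ at $(T_1+T_2)/2$, with lateral slope $v<1$, spatial width much larger than the soliton scale, and a small time buffer extending past $T_1$ and $T_2$. The exterior energy is then controlled by three facts: finite speed of propagation for \eqref{eq:NLKG}; the exponential localization of $Q$, making its contribution outside $\mathcal R$ negligible; and the smallness $E(\vec u) - J(Q) < \eps^2 \ll R^2$, which caps the radiation-type energy available to reach the lateral boundary. Combining these, as in the three-dimensional argument of \cite{NakS}, yields $\bigl|\int_{T_1}^{T_2}\err\, dt\bigr| = o(\kappa_1(\delta_*)\log(\delta_*/R))$. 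Substituting back gives $\kappa_1(\delta_*)\log(\delta_*/R)\lesssim R$; since $R\le R_*\ll \kappa_1(\delta_*)^{1/2}$ by \eqref{choice e R*} while the logarithm is unbounded as $R\to 0$, this inequality fails once $\eps$ (hence $R$) is small enough, producing the desired contradiction.
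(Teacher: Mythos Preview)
Your overall strategy---argue by contradiction via the localized virial identity, fix the sign on $[T_1,T_2]$ by Lemma~\ref{sign}, and combine the ejection lemma near the endpoints with the variational bound elsewhere---is exactly the route the paper (following~\cite{NakS}) takes. The gap is in the quantitative lower bound you claim for $\int_{T_1}^{T_2}\sg K_2\,dt$, and consequently in the way you close the contradiction.

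The ejection lemma applied forward from $T_1$ gives $d_Q(\vec u(t))\simeq e^{k(t-T_1)}R$ until $d_Q$ reaches $\de_X$, so this hyperbolic phase has length $\sim k^{-1}\log(\de_X/R)$; within it the subset where $d_Q\ge\de_*$ has length only $\sim k^{-1}\log(\de_X/\de_*)$, a \emph{fixed constant} independent of~$R$. Nothing in the argument produces a sub-interval of length $\gtrsim k^{-1}\log(\de_*/R)$ on which $d_Q\ge\de_*$, so your bound $\int\sg K_2\gtrsim\ka_1(\de_*)\log(\de_*/R)$ is unjustified. What one actually does is integrate the ejection estimate $\sg K_2\gtrsim d_Q - C_*R\simeq e^{kt}R - C_*R$ directly over the hyperbolic phase; this already yields a contribution $\gtrsim\de_X$, a fixed constant. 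The contradiction is then $\de_X\lesssim R+\bigl|\int\err\bigr|$, which closes because $R\le R_*\ll\de_X$ by~\eqref{choice e R*} while the error (governed, via finite propagation speed, by the exterior energy at $T_1,T_2$, where $\vec u$ is $O(R)$-close to $(\pm Q,0)$ and hence that energy is $O(R^2)$ plus the exponentially small tail of~$Q$) can also be made $\ll\de_X$. In particular, your final step ``once $\e$ (hence $R$) is small enough'' is not available: $R\in(2\e,R_*]$ is prescribed by the hypothesis and may equal~$R_*$ regardless of~$\e$. A minor secondary point: invoking Lemmas~\ref{sign} and~\ref{K lower bd} with $\de=R$ would require $\e<\e_0(R)$, which is not assumed; one should work with $\de=\de_*$ as set up in~\eqref{choice e R*}.
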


Moreover, there exist disjoint subintervals $I_1,I_2,\dots\subset I'$ with the following property: On each $I_m$, there exists $t_m\in I_m$ such that 
\EQ{
 d_Q(\vec u(t))\simeq e^{k|t-t_m|}d_Q(\vec u(t_m)), \pq \min_{s=0,2}\sg K_s(u(t))\gec d_Q(\vec u(t))-C_*d_Q(\vec u(t_m)),}
where $\sg:=\Sg(\vec u(t))\in\{\pm 1\}$ is constant, $d_Q(\vec u(t))$ is increasing for $t>t_m$, decreasing for $t<t_m$, 
equals to $\de_X$ on $\p I_m$. For each $t\in I'\setminus\Cu_mI_m$ and $s=0,2$, one has  $(t-1,t+1)\subset I'$, $d_Q(\vec u(t))\ge\de_*$ and 
\EQ{ \label{int bd K}
 \int_{t-1}^{t+1}\min_{s=0,2}\sg K_s(u(t'))dt' \gg R_*^2.}

By the monotonicity, we can keep applying the above theorem at each $t>\t_2$ until $d_Q(\vec u)$ reaches $R_*$. 
Besides, one concludes that at  any later time $t_m>\t_2$ necessarily  $d_Q(\vec u)>R_*$. In other words, $u$ cannot return to the 
distance $R_*$ to $\pm Q$, after it is ejected to the distance $\de_X>R_*$.  

\subsection{Blowup in $\Sg=-1$}

If follows from the one-pass theorem  that the sign $\Sg$ stabilizes, i.e., if $u(t)$ exists on some maximal time-interval $I=[0,T_{*})$,
then there is $0<T_{0} \in I$ with the property that $\Sg(\vec u(t))=1$ or $-1$ on $T_{0}<t<T_{*}$. We now show by the usual Payne-Sattinger
convexity argument that necessarily $T_{*}<\I$ if $\Sg(\vec u(t))=-1$. 

Suppose not, and define $y(t)=\| u(t)\|_{2}^{2}$. Then
\[
\ddot y(t) = 2[\|\dot u\|_{2}^{2} - K_{0}(u(t))] 
\] 
implies together with the uniform negative upper bound on $K_{0}(u(t))$ for large times that
$y(t)\to\I$ as $t\to\I$. In particular, 
\[
\ddot y(t) = -2(p+1) E(\vec u) + 2p\|\dot u(t)\|_{2}^{2} + p\|u(t)\|_{H^{1}}^{2}\ge 2p \|\dot u(t)\|_{2}^{2}
\]
for large times, whence also, with $\al=\frac{p}{2}-1$,  
\[
\frac{d^{2}}{dt^{2}} y^{-\al} =-\al y^{-\al-2}(y\ddot y - (\al+1) \dot y^{2})<0
\]
for large times. But this is a contradiction to $y(t)\to\I$. 

\subsection{Global existence and scattering in $\Sg=1$}

It is clear that $T_{*}=+\I$ if $\Sg=1$. Indeed, this follows from the fact that $K_{0}(u(t))\ge0$ forces 
$$E(\vec u)\simeq \|u(t)\|_{H^{1}}^{2} + \|\dot u(t)\|_{2}^{2}$$
for all times. On the other hand, scattering is harder but can be obtained via the Kenig-Merle by essentially
the exact same arguments as in~\cite{NakS}. The only difference is that the Bahouri-Gerard~\cite{BaG} (as well as Merle-Vega~\cite{MeV})
decomposition and the perturbation lemma need to be stated in terms of Strichartz norms of the $1$-dimensional equation.
But this has already been done in~\cite{IMN}, and we see no reason to write this out again. Hence, we now obtain
the following scattering result in this case. 

\begin{thm}
For each $\e\in(0,\e_*]$, there exists $0<M(J(Q)+\e^2)<\I$ such that if a solution $u$ of NLKG on $[0,\I)$ satisfies 
$E(\vec u)\le J(Q)+\e^2$, $d_Q(\vec u(t))\ge R_*$ and $\Sg(\vec u(t))=+1$ for all $t\ge 0$, then $u$ 
scatters to $0$ as $t\to\I$ and $\|u\|_{L^p_tL^{2p}_x(0,\I)}\le M$. 
\end{thm}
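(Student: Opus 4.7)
The plan is to implement the Kenig--Merle concentration-compactness/rigidity scheme in the one-dimensional setting, using the profile decomposition and perturbation lemma already established in \cite{IMN}. Let $\eta:=J(Q)+\e^{2}$ and
\[
 M(\eta):=\sup\bigl\{\|u\|_{L^{p}_{t}L^{2p}_{x}(0,\I)} : u \text{ satisfies the hypotheses with } E(\vec u)\le\eta\bigr\}.
\]
A standard Strichartz/contraction argument based on \eqref{eq:KGS}, Sobolev embedding and the condition $p>5$ (cf.~the proof of Lemma \ref{0freq scat}) gives small data scattering, hence $M(\eta_{0})<\I$ for some small $\eta_{0}>0$. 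Setting $E_{c}:=\sup\{\eta'\le\eta : M(\eta')<\I\}$, I would argue by contradiction, assuming $E_{c}<\eta$.

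From a sequence of solutions in the class with energies approaching $E_{c}$ from above and with diverging Strichartz norm, the Bahouri--Gerard decomposition for the 1D free Klein--Gordon flow (as in \cite{IMN}), combined with the corresponding 1D perturbation lemma, extracts a single non-trivial profile and thus a \emph{critical element} $u_{*}$. Because the ambient class $\HH^{\e}$ consists of even functions, spatial translation parameters are eliminated from the profile decomposition and the forward orbit $\{\vec u_{*}(t):t\ge 0\}$ is precompact in $\HH$. By Lemma \ref{Sg+ eng bd} this orbit is also bounded, and by continuity the conditions $d_{Q}(\vec u_{*}(t))\ge R_{*}$ and $\Sg(\vec u_{*}(t))=+1$ persist along it throughout its forward existence interval, which must therefore be all of $[0,\I)$.

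The rigidity step reuses the localized virial functional from the one-pass theorem, $V_{w}(t)=\lan wu_{t}\,|\,(x\p_{x}+\p_{x}x)u\ran$ with $\dot V_{w}(t)=-K_{2}(u_{*}(t))+\err$, the error being controlled by the energy of $\vec u_{*}$ outside an expanding spacetime rhombus. Precompactness of the trajectory allows the error to be made arbitrarily small by sending the spatial scale of $w$ to infinity, while Lemma \ref{K lower bd} applied with $\de=R_{*}$ yields a uniform lower bound $K_{2}(u_{*}(t))\ge c(R_{*})>0$. Since precompactness and boundedness render $V_{w}(t)$ uniformly bounded in $t$, integrating $\dot V_{w}$ over a sufficiently long interval produces a contradiction, so $E_{c}\ge\eta$ as required; the uniform bound on $\|u\|_{L^{p}_{t}L^{2p}_{x}}$ then follows by standard consequences. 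The main obstacle is not in this rigidity step, which is parallel to the one in \cite{NakS}, but in confirming that the 1D profile decomposition and perturbation lemma are strong enough to preserve the conditions $d_{Q}\ge R_{*}$ and $\Sg=+1$ along the extracted critical element; those 1D ingredients, however, are already supplied by \cite{IMN}.
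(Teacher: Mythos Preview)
Your proposal is correct and follows precisely the approach the paper indicates: the Kenig--Merle concentration-compactness/rigidity scheme from \cite{NakS}, with the one-dimensional Bahouri--Gerard decomposition and perturbation lemma imported from \cite{IMN}, and evenness eliminating the spatial translation parameter so that the critical element has a precompact forward orbit. The only imprecision is that Lemma~\ref{K lower bd} by itself does not give a uniform lower bound $K_{2}(u_{*}(t))\ge c(R_{*})>0$ (the degenerate branch $K_{2}\ge\ka_{0}\|\p_{x}u\|_{L^{2}}^{2}$ must be ruled out for the critical element via Lemma~\ref{0freq scat} or precompactness), but this is a routine detail already handled in \cite{NakS}.
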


The proof of Theorem~\ref{thm:9set} now follows   by the exact same arguments as in~\cite{NakS}. 

\section{Perturbation theory near the soliton for even solutions}

The goal of this section is to construct the center-stable manifold near $Q$ for 
even solutions of the NLKG equation~\eqref{eq:NLKG}. Due to the lack of adequate dispersion in one
dimension, we will need to invoke weighted estimates in our construction similar to those appearing in~\cite{Miz}. 
To formulate the theorem, we recall the basic hyperbolic/dispersive dynamics near the soliton.
Linearizing~\eqref{eq:NLKG} around $Q$ yields the operator
\EQ{\label{eq:L+}
 L_+ = -\p_{xx} +1 - p Q^{p-1}
}
which has a ground state $L_+\rho=-k^2\rho$, $k>0$, $\rho>0$, a zero eigenvalue $L_+\p_x Q=0$, no other eigenvalues, 
and no threshold resonance (provided $p>3$, which is the sharp condition for these properties), see for example~\cite[Lemma 9.1]{KS1}.  
Similarly,
\EQ{
\label{eq:L-}
L_{-}=-\p_{xx} +1 - Q^{p-1}
}
has no negative spectrum, $Q$ as a ground state since $L_{-}Q=0$, and no other eigenvalues, 
and no threshold resonance (again for $p>3$). 

We now seek solutions of the form\footnote{In contrast to the previous section we use $\mu$ here
since $\la$ will be used for the spectral parameter.} 
 $u=Q+v=Q+\mu\rho+w$ with $v$ small, and $w\perp \rho$. We write $\vec u=(u,\dot u)$.

\begin{prop}[Center-Stable manifolds]
\label{prop:CS}  Let $p\ge 5$. 
There exists $\nu>0$ small and a $C^1$ graph~$\M$ in $B_{\nu}(Q,0)\subset\HH$ so that $(Q,0)\in \M$, with tangent plane 
\EQ{\label{eq:TQM}
T_{Q}\M=\{(u_{0},u_{1})\in\HH\mid \langle k u_{0} + u_{1}|\rho\rangle =0\}
}
at $(Q,0)$ 
in the sense that 
\EQ{\label{eq:TM}
\sup_{x\in\partial B_{\delta}(Q,0)} \dist(x , T_{Q}\M)\lec \delta^{2} \quad \forall\; 0<\delta<\nu
}
Any data $(u_{0},u_{1})\in \M$ lead to global evolutions of~\eqref{eq:NLKG} of the form $u=Q+v=Q+\mu\rho+w$
where $v$ satisfies 
\EQ{\label{eq:vklein}
\|(v,\dot v)\|_{L^{\I}_{t} \HH} + \|v\|_{L^{p}((0,\I);L^{2p}(\R))}\lec\nu
}
and scatters to a free Klein-Gordon solution in~$\HH$, i.e., there exists a unique free Klein-Gordon solution $w_\I$ such that 
\EQ{\nn 
 |\mu(t)|+|\dot \mu(t)|+\|\vec w(t)-\vec w_\I(t)\|_\HH \to 0,} 
as $t\to\I$. In particular, we have $E(\vec u)=J(Q)+\|\vec w_\I\|_{\HH}^2/2$. 
Finally, any solution that remains inside $B_{\nu}(Q,0)$ for all $t\ge 0$ necessarily lies entirely on~$\M$, and $\M$ is invariant under the flow for all $t\ge0$.
\end{prop}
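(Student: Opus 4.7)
The plan is to carry out a Lyapunov--Perron construction of the stable manifold at $(Q,0)$. Decomposing $u=Q+\mu\rho+w$ with $\lan w|\rho\ran=0$, and using that on even data the odd zero mode $\p_x Q$ of $L_+$ plays no role, equation~\eqref{eq:NLKG} reduces to
\[
\ddot\mu = k^2\mu + \|\rho\|_{L^2}^{-2}\lan N(\mu\rho+w)|\rho\ran,\qquad \ddot w+L_+w = P_c N(\mu\rho+w),
\]
where $P_c$ projects onto $\{\rho\}^\perp$ and $N(v):=(Q+v)^p-Q^p-pQ^{p-1}v$ is at least quadratic in $v$. Diagonalizing the $\mu$--ODE via $\mu_\pm:=(\mu\pm k^{-1}\dot\mu)/2$ yields $\dot\mu_\pm=\pm k\mu_\pm+(\text{nonlinear})$, so the only hyperbolic instability is along the single direction $\mu_+$.

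Next I would set up a fixed point on $[0,\I)$ in the space
\[
X:=\Big\{(\mu,w):\|\vec w\|_{L^\I_t\HH}+\|w\|_{L^p_tL^{2p}_x}+\sup_{t\ge 0}|\mu(t)|\le C\nu\Big\},
\]
treating $(w(0),\dot w(0))$ and $\mu_-(0)$ as free parameters and enforcing the shooting condition
\[
\mu_+(0) = -\int_0^\I e^{-ks}\,\Phi(s)\,ds,
\]
where $\Phi(s)$ denotes the nonlinear forcing in the $\mu_+$--equation evaluated along $(\mu(s),w(s))$. This is exactly what is required to kill the exponential mode of $\mu_+$, and the integral converges thanks to the Strichartz control of $w$ together with the smallness of $\mu$.

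The main obstacle is producing enough dispersive control for the perturbed equation $\ddot w+L_+w=P_c f$ to close the Duhamel iteration in the scattering norm $L^p_tL^{2p}_x$. In~\cite{NakS} this used the free $t^{-3/2}$ decay of the $3$D Klein-Gordon group, but in $1$D the free decay is only $t^{-1/2}$, which is not integrable in Duhamel, so the higher-dimensional argument does not transplant. Following Mizumachi~\cite{Miz}, I would instead establish a local (weighted-$L^2_x$) decay estimate for $e^{\pm it\sqrt{L_+}}P_c$ via a limiting absorption principle for the resolvent $(L_+-\lam)^{-1}$, using crucially that $L_+$ has no embedded eigenvalues and, because $p>3$, no threshold resonance, so the resolvent admits an integrable Jensen--Kato expansion at the spectral edges. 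Since the difference between $L_+$ and $-\p_{xx}+1$ is the exponentially localized potential $pQ^{p-1}$, these local-decay bounds combined with a Littlewood-Paley decomposition and free Strichartz on high frequencies upgrade to global Strichartz estimates for the perturbed flow that match those of the free Klein-Gordon group.

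With these bounds in place, the fixed-point map is a contraction for $\nu$ small and depends on the parameters in a $C^1$ fashion, so $\mu_+(0)$ is a $C^1$ function of $(w(0),\dot w(0),\mu_-(0))$ whose graph $\M$ has tangent plane $\{(h_0,h_1)\in\HH:\lan kh_0+h_1|\rho\ran=0\}$ at $(Q,0)$ (the linearization of $\mu_+(0)$ in $(\mu(0),\dot\mu(0))$ being $(k\mu(0)+\dot\mu(0))/(2k)$), and~\eqref{eq:TM} follows from the quadratic nature of the fixed-point error. Scattering is then immediate: $\|w\|_{L^p_tL^{2p}_x}<\I$ together with smallness and exponential localization of $\mu\rho$ put the full nonlinearity in $L^1_tL^2_x$, so the Duhamel integral for $w$ converges in $\HH$ and defines $\vec w_\I$; decay of $|\mu(t)|+|\dot\mu(t)|$ follows from the local-smoothing decay of $w$ paired against the exponentially localized $\rho$. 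Finally, any $\vec u(t)\in B_\nu(Q,0)$ for all $t\ge 0$ has $\mu_+$ uniformly bounded, and substituting this into $\dot\mu_+=k\mu_++O(\nu^2)$ forces $\mu_+(0)$ to coincide with the shooting value, placing $\vec u(0)\in\M$; forward invariance is the same uniqueness statement run forward in time.
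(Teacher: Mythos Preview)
Your overall architecture---decompose $u=Q+\mu\rho+w$, diagonalize the $\mu$-ODE into $\mu_\pm$, impose the shooting condition on $\mu_+(0)$, and run a Lyapunov--Perron fixed point---matches the paper. You also correctly identify that the key new input in one dimension is a local dispersive estimate for $e^{\pm it\sqrt{L_+}}P_c$ coming from the absence of a threshold resonance. However, there is a genuine gap in how you propose to use that estimate, and your iteration as written will not close.

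The problem is the quadratic part of the nonlinearity. Since $N(v)=(Q+v)^p-Q^p-pQ^{p-1}v$ satisfies $|N(v)|\lec Q^{p-2}|v|^2+|v|^p$, closing Duhamel in $L^1_tL^2_x$ requires controlling $\|Q^{p-2}w^2\|_{L^1_tL^2_x}$. Your iteration space carries only $\|w\|_{L^\I_t H^1}$ and $\|w\|_{L^p_tL^{2p}_x}$; from these you get at best $w^2\in L^{p/2}_tL^p_x$ (and $L^\I_tL^\I_x$), and since in one dimension no Klein--Gordon Strichartz norm with time exponent $\le 2$ is available (the admissible range starts at $L^{4+}_t$), there is no way to interpolate down to $L^1_t$. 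The exponential localization of $Q^{p-2}$ helps in $x$ but not in $t$. So your claim that ``$\|w\|_{L^p_tL^{2p}_x}<\I$ \dots put the full nonlinearity in $L^1_tL^2_x$'' is false for the $Q^{p-2}w^2$ term, and the contraction does not close in your space $X$.

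The paper's fix is to make the weighted estimate part of the \emph{iteration norm}, not merely a tool to upgrade to Strichartz. Concretely, one proves a local-smoothing bound $\|\LR{x}^{-s}w\|_{L^2_tH^{1/2}_x}\lec \|w[0]\|_{\HH}+\|F\|_{L^1_tL^2_x}$ for solutions of $\ddot w+L_+w=F$ on $\{\rho,Q'\}^\perp$ (this is where the nonresonance of $L_+$, and in fact of $L_-$ via an intertwining trick $UL_+=L_-U$ with $U=\rho\p_x\rho^{-1}$, is used), and then adds $\|w\|_L:=\|\LR{x}^{-2}w\|_{L^2_{t,x}}$ to the iteration space. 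This norm immediately handles the dangerous term via
\[
\|Q^{p-2}w^2\|_{L^1_tL^2_x}\lec \|\LR{x}^{-s}w\|_{L^2_tL^4_x}^2\lec \|\LR{x}^{-s}w\|_{L^2_tH^{1/2}_x}^2,
\]
and the system closes with $\|w\|_S+\|w\|_L\lec \de+(\|w\|_S+\|w\|_L)^2$. (Incidentally, the paper obtains perturbed Strichartz directly from the $L^r$-boundedness of the wave operators, not from local decay; your proposed route to Strichartz is plausible but is not the crux.) You should also put $\mu$ in $L^1\cap L^\I((0,\I))$ rather than just $L^\I$, since the cross-terms $Q^{p-2}\mu\rho\, w$ need time integrability of $\mu$ as well.
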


The regularity of the graph $\M$ is better than $C^1$ (depending on $p$), but for general $p$ the nonlinearity is not smooth and one can therefore
not expect $\M$ to be smooth (if the nonlinearity is given by an integer power such as $p=5,7$ then $\M$ is smooth). 
The remainder of this section is devoted to proving this result. As in~\cite{Miz}, \cite{Cuc}, we shall use
weighted $L^\I_x L^2_t$ bounds to overcome the weaker dispersion in one dimension. These estimates hinge on the fact
that $L_{\pm}$ have no threshold resonance which is reflected by the regularity of the spectral measure near zero energy. The latter
then guarantees faster {\em local} decay of the Klein-Gordon evolutions relative to $L_{\pm}$. 
To obtain these bounds we shall use the distorted Fourier transform. 

\subsection{The distorted Fourier transform}

We begin by recalling the distorted Fourier transform relative to
a Schr\"odinger operator on the line
\EQ{\label{eq:calLdef}
\calL:= -\frac{d^2}{dx^2} + V
}
with real-valued potential. 
In our case 
\EQ{\label{eq:Vcosh} 
V(x)=-\alpha \cosh^{-2}(\beta x)} for suitable $\alpha,\beta>0$, but for the 
moment we only need $V\in L^1_{\mathrm{loc}}(\R)$ and $\calL$ in the limit-point case at
 $\pm\I$. This material is of course standard, see for example Section~2 of~\cite{GZ}.  
Define $\phi_\al(x,x_0;z)$, $\theta_\al(x,x_0;z)$ to be the fundamental system of solutions of $$\calL \psi = z\psi,\quad  
z\in\C$$ so that
\EQ{\label{eq:phitheta}
\phi_\al(x_0,x_0;z)&=-\theta_\al'(x_0,x_0;z)= -\sin\al\\
\phi_\al'(x_0,x_0;z)&=\theta_\al(x_0,x_0;z)= \cos\al
}
where $x_0\in\R$ and $\al\in[0,\pi)$. Their Wronskian 
is
\[
 W(\theta_\al,\phi_\al)=1
\]
The {\em Weyl-Titchmarsh} solutions are defined as the unique solutions $\psi_{\pm,\al}(\cdot,x_0;z)\in L^2([x_0,\pm\I),dx)$
for $z\in\C\setminus\R$ 
which satisfy the boundary condition
\[
 \psi_{\pm}'(x_0,x_0;z)\sin\al + \psi_{\pm}(x_0,x_0;z)\cos\al  =1
\]
 This boundary condition ensures that 
\EQ{\label{eq:mdef}
\psi_{\pm,\al}(x,x_0;z) = \theta_\al(x,x_0;z) + m_{\pm,\al}(z,x_0)\phi_\al(x,x_0;z)
}
and the Wronskian
\[
 W(\psi_+(\cdot,x_0;z),\psi_-(\cdot,x_0;z)) = m_{-,\al}(z,x_0) - m_{+,\al}(z,x_0)
\]
The Weyl-Titchmarsh functions $m_{\pm,\al}$ are Herglotz functions, and the associated Weyl-Titchmarsh matrix
\EQ{\label{eq:Mal}
 M_\al(z,x_0):=\left[  \begin{matrix} 
\frac{1}{m_{-,\al}(z,x_0)-m_{+,\al}(z,x_0)} & \frac12 \frac{m_{-,\al}(z,x_0)+m_{+,\al}(z,x_0)}{m_{-,\al}(z,x_0)-m_{+,\al}(z,x_0)} \\
\frac12 \frac{m_{-,\al}(z,x_0) + m_{+,\al}(z,x_0)}{m_{-,\al}(z,x_0)-m_{+,\al}(z,x_0)} & \frac{m_{-,\al}(z,x_0)m_{+,\al}(z,x_0)}{m_{-,\al}(z,x_0)-m_{+,\al}(z,x_0)}
\end{matrix}
\right]
}
is a Herglotz matrix. This implies that there exists a nonnegative $2\times 2$-matrix valued measure $\Omega_\al(d\la,x_0)$ so that the representation 
\EQ{
 M_\al(z,x_0) &= C_\al(x_0) + \int_{\R} \big[ \frac{1}{\la-z} - \frac{\la}{1+\la^2} \big]\, \Omega_\al(d\la,x_0) \\
C_\al(x_0) &= C_\al(x_0)^*,\quad \int_{\R} \frac{\| \Omega_\al(d\la, x_0)\|}{1+\la^2} < \I
}
holds.  The measure $\Omega_\al$ satisfies
\EQ{\label{eq:Omrep}
\Omega_\al((\la_1,\la_2],x_0) = \pi^{-1} \lim_{\de\to0+}\lim_{\e\to0+} \int_{\la_1+\de}^{\la_2+\de} \Im M_\al(\la+i\e, x_0)\, d\la 
}
The matrix measure $\Omega_\al$ plays the role of the spectral measure, as can be seen from the following Fourier
representation relative to~$\calL$. 

\begin{prop}
\label{prop:FT} 
Let $\al\in[0,\pi)$, $f,g\in C_0^\I(\R)$, $F\in C(\R)\cap L^\I(\R)$, and $\la_1<\la_2$. Then
\EQ{
(f,F(H)E_H((\la_1,\la_2])g)_{L^2(\R;dx)} &= (\hat{f}_\al(\cdot,x_0), M_F M_{\chi_{(\la_1,\la_2]}}\hat{g}_\al(\cdot,x_0))_{L^2(\R,\Omega_\al(\cdot,x_0))} \\
&= \int_{(\la_1,\la_2]} \overline{\hat{f}_\al(\la,x_0)}^{T} \: \Omega_\al(d\la,x_0) \: \hat{g}_\al(\la,x_0)\, F(\la)
}
where
\EQ{
 \hat{h}_{\al,1}(\la,x_0) &= \int_{\R} h(x)\theta_\al(x,x_0;\la)\, dx,\quad \hat{h}_{\al,2} (\la,x_0) = \int_{\R} h(x)\phi_\al(x,x_0;\la)\, dx   \\
 \hat{h}_\al(\la,x_0) &= ( \hat{h}_{\al,1}, \hat{h}_{\al,2} )^{T}(\la,x_0)
}
This Fourier transform establishes a unitary correspondence between $L^2(\R,dx)$ and $L^2(\R,\Omega_0)$. 
\end{prop}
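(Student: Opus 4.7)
My plan follows the classical Sturm--Liouville spectral theory (as carried out in detail in \cite{GZ}, Section~2): combine the explicit Green's function of $\calL$ written via the Weyl--Titchmarsh $m$-functions with Stone's formula for the spectral resolution of a self-adjoint operator.

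First, I would write down the resolvent kernel of $(\calL-z)^{-1}$ for $z\in\C\setminus\R$ in the standard Weyl form
$$G(x,y;z) = \frac{\psi_{-,\al}(\min(x,y),x_0;z)\,\psi_{+,\al}(\max(x,y),x_0;z)}{m_{-,\al}(z,x_0)-m_{+,\al}(z,x_0)},$$
and expand using $\psi_{\pm,\al}=\theta_\al+m_{\pm,\al}\phi_\al$. Symmetrizing the mixed $\theta\phi$ cross terms over the two cases $x\lessgtr y$ yields
$$G(x,y;z) = \sum_{i,j=1}^{2} M_\al(z,x_0)_{ij}\,e_i(x,x_0;z)\,e_j(y,x_0;z) + R(x,y;z),$$
where $e_1=\theta_\al$, $e_2=\phi_\al$, and the remainder
$$R(x,y;z) = \tfrac{1}{2}\sign(y-x)\bigl[\theta_\al(x)\phi_\al(y)-\phi_\al(x)\theta_\al(y)\bigr]$$
is antisymmetric in $(x,y)$ and---crucially---real-valued on the real axis, since $\theta_\al(\cdot;\la)$ and $\phi_\al(\cdot;\la)$ are real-entire in $z$ whenever $V$ is real.

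Next, I would invoke Stone's formula
$$(f, E_\calL((\la_1,\la_2])g)_{L^2(\R,dx)} = \pi^{-1}\lim_{\de\to 0^+}\lim_{\e\to 0^+} \int_{\la_1+\de}^{\la_2+\de} \Im\,(f,(\calL-\la-i\e)^{-1}g)\,d\la.$$
Because $R$ is real on the real axis and $e_1,e_2$ are real, taking the imaginary part of the resolvent kernel picks out exactly the boundary values $\Im M_\al(\la+i\e)_{ij}$. The defining relation \eqref{eq:Omrep} of $\Omega_\al$ as the boundary measure of $\Im M_\al$ then delivers precisely the bilinear pairing against $\Omega_\al(d\la,x_0)$ claimed in the proposition, for $f,g\in C_0^\I(\R)$. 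Inserting a bounded continuous factor $F$ is immediate from the functional calculus identity $F(\calL)E_\calL(d\la)=F(\la)E_\calL(d\la)$.

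Finally, unitarity onto $L^2(\R,\Omega_0)$ is a standard Parseval argument: specializing to $F\equiv 1$ and letting $(\la_1,\la_2]\uparrow\R$ yields $\|f\|_{L^2(\R,dx)}^2=\|\hat f_0\|_{L^2(\R,\Omega_0)}^2$ for $f\in C_0^\I(\R)$, and density together with closedness of the range (plus the spectral theorem applied to the orthogonal complement) gives surjectivity. The main technical obstacle will be the resolvent expansion and the verification that the antisymmetric remainder $R$ really does drop out; once that is in hand, the rest is a routine combination of Stone's formula with the Herglotz representation of $M_\al$.
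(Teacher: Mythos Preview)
The paper does not supply its own proof of this proposition; it is quoted as standard Weyl--Titchmarsh theory, with an explicit pointer to Section~2 of~\cite{GZ}. Your outline is correct and is precisely the classical argument carried out in that reference: expand the resolvent kernel in the basis $(\theta_\al,\phi_\al)$, observe that the leftover antisymmetric piece is real on the real axis so that only $\Im M_\al$ survives in Stone's formula, and then read off $\Omega_\al$ via \eqref{eq:Omrep}. (There is a harmless sign slip in your formula for $R$---for $x<y$ one gets $\tfrac12[\phi_\al(x)\theta_\al(y)-\theta_\al(x)\phi_\al(y)]$---but since the only property used is that $R$ is real, this does not affect the argument.)
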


For example, for the free case one finds that, with $\al=0, x_0=0$, 
\EQ{\label{eq:free}
\phi_0(x,0;\la) &= \frac{\sin(\la^{\frac12} x)}{\la^{\frac12}} ,\quad \theta_0(x,0;\la) = \cos(\la^{\frac12} x) \\
m_{\pm,0}(z,0) &= \pm iz^{\frac12}, \quad z\in \C\setminus [0,\I) \\
\Omega_0(d\la,0) &= \frac{1}{2\pi}\chi_{(0,\I)}(\la) \left[ \begin{matrix} \la^{-\frac12} & 0 \\ 0 & \la^{\frac12} \end{matrix} \right]\, d\la
}
This can be seen to lead to the usual Fourier transform on the line, but written as $\sin,\cos$ transform, with
the Fourier variable in the positive half-axis. 

\subsection{The wave operators}

Throughout  this section, we will assume for simplicity that the potential $V$ in~\eqref{eq:calLdef} is a Schwartz function. 
Although much less is required for the following results to hold, it will be sufficient for our purposes to do so (since the
soliton is a Schwartz function). Recall that the wave operators
\[
 W_\pm =s-\lim_{t\to \pm\I} e^{it\calL} e^{it\p_x^2} 
\]
 exist and are isometries $L^2(\R)\to P_c(\calL)L^2(\R)$. Moreover, $W_\pm W^*_\pm =P_c(\calL)$, $W^*_\pm W_\pm = \mathrm{Id}$, and 
\EQ{\label{eq:intertwine}
 W^*_\pm  E_\calL W_\pm = E_0
}
as an identity between projection valued measures, where $E_0$, $E_\calL$ are the spectral resolutions of $-\p_x^2$ and $\calL$, 
respectively. One refers to~\eqref{eq:intertwine} as the ``intertwining property''. It is equivalent to the statement that
\EQ{\label{eq:inter2}
f(\calL)P_c(\calL) = W_\pm f(-\p_x^2) W_\pm^*
}
for every continuous, bounded $f$ on the line. 
Weder~\cite{Weder} and Artbazar, Yajima~\cite{AY}, proved that under our assumptions, the wave operators $W_{\pm}$ are 
bounded on all $W^{k,p}$, $k\ge0$, $1<p<\I$. This is very useful, as it allows one to transfer estimates from the free case to~$\calL$
by means of~\eqref{eq:inter2}.  For example, one has the following multiplier result.

\begin{lem}
 \label{lem:Mikhlin} 
Let $m:(0,\infty)\to\C$ satisfy 
\EQ{\label{eq:Mcond}
\sup_{\la>0} [|m(\la)|+\la|m'(\la)| ] \le C
}
Then $m$ is a bounded Fourier multiplier on $P_c(\calL) L^p(\R)$ for $1<p<\I$.  By ``Fourier multiplier'' we mean a function $m(\la)$ that
multiplies both components of the vector $\hat{h}_\al$ in Proposition~\ref{prop:FT}. 
\end{lem}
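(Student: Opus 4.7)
The key observation is that any Fourier multiplier in the sense of Proposition~\ref{prop:FT} coincides with an operator in the functional calculus of $\calL$. Indeed, if $m$ acts as multiplication by the scalar $m(\la)$ on both components of $\hat h_\al(\la,x_0)$, then, reading off the spectral representation in Proposition~\ref{prop:FT}, the associated operator is exactly $m(\calL)P_c(\calL)$; this is just the spectral theorem applied with the $2\times 2$ Weyl--Titchmarsh measure $\Omega_\al$ playing the role of the spectral measure on the absolutely continuous subspace. Thus the plan is to prove that $m(\calL)P_c(\calL)$ is bounded on $L^p(\R)$ for $1<p<\infty$ under the hypothesis~\eqref{eq:Mcond}.

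The natural route is to transfer the problem to the free operator $-\p_x^2$ via the intertwining identity~\eqref{eq:inter2}. Applying it with $f(\la)=m(\la)$ (extended by $0$ on the negative axis, which is harmless since the spectrum of $-\p_x^2$ is $[0,\infty)$), we obtain
\EQ{\nn
 m(\calL)P_c(\calL) = W_\pm\, m(-\p_x^2)\, W_\pm^*.
}
By the theorems of Weder~\cite{Weder} and Artbazar--Yajima~\cite{AY}, both $W_\pm$ and $W_\pm^*$ are bounded on $L^p(\R)$ for every $1<p<\infty$. It therefore suffices to prove that $m(-\p_x^2)$ is bounded on $L^p(\R)$.

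The latter is a statement about a classical Euclidean Fourier multiplier with symbol $\wt m(\xi):=m(\xi^2)$, since $m(-\p_x^2)$ acts on the Euclidean Fourier side as multiplication by $\wt m(\xi)$. The chain rule gives
\EQ{\nn
 \xi\,\wt m'(\xi) = 2\xi^2\, m'(\xi^2) = 2\la\, m'(\la)\big|_{\la=\xi^2},
}
so the hypothesis~\eqref{eq:Mcond} yields $\sup_{\xi\ne 0}\bigl[|\wt m(\xi)|+|\xi\,\wt m'(\xi)|\bigr]\le 2C$. This is precisely the Mikhlin--H\"ormander condition in one dimension, and the classical Mikhlin multiplier theorem then gives $\|m(-\p_x^2)\|_{L^p\to L^p}\lec 1$ for each $1<p<\infty$. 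Composing with $W_\pm$ on the left and $W_\pm^*$ on the right completes the argument.

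I do not expect any serious obstacle: the only point that requires a moment of care is matching the definition of ``Fourier multiplier'' given in the statement (componentwise multiplication against the matrix-valued measure $\Omega_\al$) with the operator $m(\calL)P_c(\calL)$, and this is immediate from the construction of the distorted Fourier transform in Proposition~\ref{prop:FT}. Everything else is transport of the free Mikhlin theorem through the $L^p$-bounded wave operators.
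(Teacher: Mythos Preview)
Your argument is correct and follows essentially the same route as the paper: identify the distorted multiplier with $m(\calL)P_c(\calL)$, transfer it to the free operator via the intertwining relation $m(\calL)P_c(\calL)=W_\pm\, m(-\p_x^2)\, W_\pm^*$, and conclude from the $L^p$-boundedness of the wave operators together with the classical Mikhlin theorem applied to $\wt m(\xi)=m(\xi^2)$. The paper's proof records only the intertwining identity and leaves the Mikhlin verification implicit, whereas you spell out the chain-rule check; there is no substantive difference.
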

\begin{proof}
With $\calF$ denoting the distorted, and $\calF_0$ the free (as in~\eqref{eq:free}) Fourier transforms, respectively, one has
\[
 \calF^{-1}\circ \calM_m \circ \calF P_c (\calL)  = W_\pm \circ \calF_0^{-1} \circ \calM_m \circ \calF_0 \circ W_\pm^*
\]
where $\calM_m$ is the component-wise multiplication operator by $m(\la)$. 
\end{proof}

\subsection{Specializing to $L_\pm$}

Now we consider the case of $\calL:=L_\pm -1=-\frac{d^2}{dx^2}+V$ where $V$ is as in~\eqref{eq:Vcosh}. In fact,
we shall only use here that $V$ is an even Schwartz function and that $\calL$ has no zero energy resonance.  Recall that this means
that there is no globally bounded solution of $\calL f=0$.  Note that the free case does have a zero energy resonance, as
does 
\[
 \calL = -\frac{d^2}{dx^2} - 6\cosh^{-2}(x)
\]
which is the operator one obtains from linearizing the cubic NLKG equation.  

\begin{lem}
 \label{lem:L_+} 
Suppose that $\calL$ as in Proposition~\ref{prop:FT}  has an even Schwartz potential and no zero energy resonance. 
Then the spectral measure $\Omega_0$ is diagonal, absolutely continuous on $(0,\I)$,  and of the form 
\EQ{
\Omega_{0}(d\la,0) &= \diag( O(\la^{\frac12}), O(\la^{\frac12}))\,d\la, \quad \la\to0 \\
\Omega_0(d\la,0) &= \diag( O( \la^{-\frac12}), O(\la^{\frac12} )) \, d\la, \quad \la\to\I
}
on $\la>0$. The $O(\cdot)$-terms satisfy 
  the natural derivative bounds. 
\end{lem}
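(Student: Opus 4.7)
The plan is to exploit the evenness of $V$ together with Jost-solution formalism to compute $\Omega_0(d\la,0)$ essentially explicitly, then read off the claimed asymptotics. First I would observe that because $V(-x)=V(x)$ and the Weyl--Titchmarsh solution $\psi_{+,0}(x,0;z)$ is characterized by being $L^2$ at $+\I$ together with $\psi_{+,0}(0,0;z)=1$ (which comes from $\al=0$), the reflected function $x\mapsto \psi_{+,0}(-x,0;z)$ satisfies the same ODE, is $L^2$ at $-\I$, and has value $1$ at zero, so it must coincide with $\psi_{-,0}(x,0;z)$. Differentiating at $x=0$ gives $m_{-,0}(z,0)=-m_{+,0}(z,0)$. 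Substituting into \eqref{eq:Mal} makes the off-diagonal entries of $M_0$ vanish identically, leaving $M_0(z,0)=\diag(-1/(2m_+),m_+/2)$, and \eqref{eq:Omrep} then forces $\Omega_0$ to be diagonal.

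Next I would express $m_+$ through the Jost solution $f_+(x,k)$, normalized by $f_+(x,k)\sim e^{ikx}$ as $x\to+\I$, with $k=\sqrt\la>0$. A constant-multiple identification gives $\psi_{+,0}(x,0;\la+i0)=f_+(x,k)/f_+(0,k)$, hence $m_+(\la+i0,0)=f_+'(0,k)/f_+(0,k)$. Using $\bar{f_+(x,k)}=f_+(x,-k)$ (reality of $V$), I would decompose
\EQ{\nn
f_+(0,k)=A(\la)+ikB(\la),\qquad f_+'(0,k)=C(\la)+ikD(\la),
}
with $A,B,C,D$ smooth in $\la\ge0$ by standard Jost theory for Schwartz potentials. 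Evaluating the constant Wronskian $W(f_+,\bar{f_+})=2ik$ at $x=0$ yields the algebraic identity $AD-BC=1$, and a direct computation then gives $\Im m_+(\la+i0)=\sqrt\la/(A^2+\la B^2)$, so that
\EQ{\nn
\Omega_0(d\la,0)=\frac{1}{2\pi}\diag\Bigl(\frac{\sqrt\la}{C(\la)^2+\la D(\la)^2},\ \frac{\sqrt\la}{A(\la)^2+\la B(\la)^2}\Bigr)\,d\la.
}

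The asymptotic analysis then reduces to bounding the two denominators from below. The main obstacle, and the place where the no-resonance hypothesis enters, is the analysis at $\la=0$: I would translate the assumption that $\calL$ has no zero-energy resonance, namely $W(f_-,f_+)(0)\ne0$, into an algebraic statement about $A(0)$ and $C(0)$. Evenness of $V$ forces $f_-(x,0)=f_+(-x,0)$, so $f_-(0,0)=A(0)$ and $f_-'(0,0)=-C(0)$; the Wronskian computed at $x=0$ therefore equals $-2A(0)C(0)$, and the no-resonance assumption reads $A(0)C(0)\ne0$. Hence both denominators are bounded away from zero near $\la=0$, giving the claimed $O(\la^{1/2})$ bound for both diagonal entries. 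For $\la\to\I$, standard Born-series estimates (valid because $V$ is Schwartz) yield $f_+(0,k)=1+O(k^{-1})$ and $f_+'(0,k)=ik+O(1)$, so $A^2+\la B^2\to 1$ and $C^2+\la D^2\sim\la$, producing the $O(\la^{1/2})$ and $O(\la^{-1/2})$ behaviour respectively.

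Finally, the derivative bounds on the $O(\cdot)$ symbols follow because $A,B,C,D$ are $C^\I$ in $\la>0$ with symbol-type estimates at infinity: each $\la$-derivative of $f_+(0,k)$ and $f_+'(0,k)$ is controlled by differentiating the Volterra integral equation satisfied by $f_+$ and using rapid decay of $V$. These estimates survive division by $A^2+\la B^2$ and $C^2+\la D^2$ because of the uniform positive lower bounds just established, so the natural derivative estimates in $\la$ at $0$ and at $\I$ are preserved.
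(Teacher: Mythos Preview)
Your proof is correct and follows the same overall strategy as the paper---use evenness to diagonalize $M_0$, then analyze $m_+$ through Jost solutions---but the execution of the small-$\la$ analysis differs in a useful way. The paper introduces a zero-energy fundamental system $u_{0,+},u_{1,+}$ normalized at $+\infty$, perturbs these in $\la$ via Volterra equations, expands the Jost solutions as $f_\pm=a_\pm u_{0,\pm}+b_\pm u_{1,\pm}$, and extracts the expansion $W(\la)=c_0+ic_1\la^{1/2}+O(\la^{1-\e})$ with $c_0\ne 0$ coming from $W(u_{0,+},u_{0,-})\ne 0$. You instead parametrize the Jost data at the origin by the real/imaginary splitting $f_+(0,k)=A+ikB$, $f_+'(0,k)=C+ikD$ and invoke the single algebraic identity $AD-BC=1$ (from $W(f_+,\bar f_+)$), which produces the explicit closed formula
\[
\Omega_0(d\la)=\tfrac{1}{2\pi}\,\diag\!\Big(\tfrac{\sqrt\la}{C^2+\la D^2},\,\tfrac{\sqrt\la}{A^2+\la B^2}\Big)\,d\la.
\]
Your formulation makes the role of the no-resonance hypothesis completely transparent (it is exactly $A(0)C(0)\ne 0$) and avoids the perturbative construction of $u_{j,+}(\cdot,\la)$. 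The paper's route, on the other hand, yields an explicit expansion of $W(\la)$ in powers of $\la^{1/2}$ rather than just bounds, and sets up the zero-energy basis that one might want for further refinements. Both deliver the stated lemma with the natural derivative bounds.
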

\begin{proof}
Taking $\al=0$ and $x_0=0$ in~\eqref{eq:phitheta} (and suppressing $x_0$)  yields 
\[
 \theta_0(x;z)=\theta_0(-x;z),\quad \phi_0(x;z)=-\phi_0(-x;z),\quad \psi_{-,0}(x;z)= \psi_{+,0}(-x;z)
\]
whence $m_{-,0}(z)=-m_{+,0}(z)$ and $W(z)=2m_{-,0}(z)$. Denote by $u_{0,+}(x)$ and $u_{1,+}(x)$ a fundamental system of solutions
to $\calL f=0$ with 
\EQ{\label{eq:u01+}
u_{0,+}(x) &= 1 + O(x^{-100}) \\
u_{1,+}(x) &= x + O(x^{-100})
}
as $x\to\I$.  This representation follows from the Volterra integral equations
\EQ{\nn
u_{0,+}(x) &= 1 + \int_{x}^{\I} (y-x) V(y) u_{0,+}(y)\, dy \\
u_{1,+}(x) &= x + \int_{x}^{\I} (y-x) V(y) u_{1,+}(y)\, dy
}
by iteration. 
In particular, $W(u_{1,+},u_{0,+})=1$.
Furthermore, $u_{0,-}, u_{1,-}$ denote the corresponding solutions, but with $x\to-\I$. By symmetry, $u_{0,-}(x)=u_{0,+}(-x)$ and $u_{1,-}(x)=-u_{1,+}(-x)$.
Since zero energy is nonresonant, $W(u_{0,+}, u_{0,-})\ne0$.  Perturbatively in $\la$, we now obtain from $u_{j,+}(x)$ unique 
eigenfunctions $u_{j,+}(x,\la)$ satisfying $\calL u_{j,+} =\la u_{j,+}$, as well as 
 for small $\lambda$ and $|\la x^{2}|\ll 1$, 
\EQ{\nn 
u_{j,+}(x,\la) &= u_{j,+}(x)(1+O(\la x^{2})) \qquad j=0,1
}
Indeed, $u_{j,+}(x,\la)$ are given in terms of the Volterra equations
\EQ{\nn 
u_{j,+}(x,\la)  = u_{j,+}(x) + \la \int_{0}^{x} [u_{0,+}(x)u_{1,+} (y) - u_{0,+}(y)u_{1,+} (x)]\, u_{j,+}(y,\la) \, dy
}
Similarly,  the Jost solutions $f_{+}(x,\la)$ defined by $\calL f_{\pm}(\cdot,\la)=\la  f_{\pm}(\cdot,\la)$, $f_{\pm}(x,\la)\simeq e^{\pm ix\la^{\frac12}}$ as $x\to\pm\I$, satisfy
\[
f_{\pm}(x,\la) = e^{\pm ix\la^{\frac12}}(1+O(x^{-100})) \qquad \pm x\gg 1
\]
As usual, this follows from the Volterra representation of these functions. 
Moreover, one has 
\[
f_{\pm}(x,\la) = a_{\pm}(\la) u_{0,\pm }(x,\la) + b_{\pm} (\la) u_{1,\pm}(x,\la)
\]
with 
\[
a_{\pm}(\la) = -W(f_{\pm}(\cdot,\la), u_{1,\pm}(\cdot,\la)),\quad  b_{\pm}(\la) = W(f_{\pm}(\cdot,\la), u_{0,\pm}(\cdot,\la))
\]
Therefore,  for any small $\e>0$ 
\EQ{
a_{\pm}(\la) &=  1 +  O(\la^{1-\eps}) \\
b_{\pm}(\la) &= \mp i\la^{\frac12} +O(\la^{1-\eps})
}
as $\la\to0$. In conclusion, 
\EQ{\label{eq:Wla}
W(\la):=W(f_{+}(\cdot,\la),f_{-}(\cdot,\la)) = c_{0}+ ic_{1}\la^{\frac12} + O(\la^{1-\eps}) 
}
where $c_{0},c_{1}\in\R$ with $c_{0}\ne0$. The matrix in~\eqref{eq:Mal} is
\[
M_{0}(\la) = \diag( W(\la)^{-1},\frac14 W(\la)) 
\]
and the measure $\Omega_{0}(\la)$ satisfies, for small $\la$, by~\eqref{eq:Omrep}
\EQ{\label{eq:Omegala}
\Omega_{0}(d\la) = \diag( O(\la^{\frac12}), O(\la^{\frac12}))\,d\la, \quad \la\to0
}
For large $\la$, the free representation \eqref{eq:free} describes $\Omega_{0}$ to leading order. 
\end{proof}

\subsection{Evolution estimates} 

We now establish some estimates on the   Klein-Gordon evolution relative to $L_{\pm}$ on the line. 
These are analogous to those for the Schr\"odinger evolution obtained in~\cite{Miz}.  

\begin{lem}\label{lem:evol}
Let $L_{\pm}$ be as in \eqref{eq:L+}, \eqref{eq:L-}, respectively, with $p>3$. Then 
one has the following bounds 
\EQ{\label{eq:KG1}
\Big\| \lan x\ran^{-1} e^{\pm it  L_{\pm}^{\frac12} } P_{c}(L_{\pm}) f \Big \|_{L^{\I}_{x} L^{2}_{t}} &\lec \| \lan \p_{x}\ran^{\frac12}  f\|_{2} \\
\Big\| \lan x\ran^{-1} \int_{-\I}^{t} e^{\pm i(t-s)  L_{\pm}^{\frac12} } P_{c}(L_{\pm}) f(s)\, ds \Big \|_{L^{\I}_{x} L^{2}_{t}} &\lec  \int_{-\I}^{\I} \| \lan \p_{x}\ran^{\frac12}  f(s)\|_{2}\, ds
}
for all Schwartz functions $f$ on the line. 
\end{lem}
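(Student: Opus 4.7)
The plan is to first reduce the inhomogeneous estimate to the homogeneous one via Minkowski, then prove the homogeneous bound through the distorted Fourier representation provided by Proposition \ref{prop:FT}. For the Duhamel term: at each fixed $x$, Minkowski's inequality pulls the $s$-integral outside the $L^2_t$ norm, and the time-translation invariance of $e^{\pm it L_\pm^{1/2}}$ reduces matters to the homogeneous estimate applied separately to each $f(s,\cdot)$. So the real content lies in the first line of \eqref{eq:KG1}.

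For the homogeneous bound, Proposition \ref{prop:FT} with $\al=0, x_0=0$ combined with the diagonality of $\Om_0$ from Lemma \ref{lem:L_+} yields the representation
\EQ{\nn
e^{it L_\pm^{1/2}} P_c f(x) = \sum_{j=1,2}\int_0^\I e^{it\sqrt{1+\mu}}\,e_j(x;\mu)\,\hat f_j(\mu)\,\sigma_j(\mu)\,d\mu,
}
where $e_1(\cdot;\mu)=\theta_0(\cdot,0;\mu)$, $e_2(\cdot;\mu)=\phi_0(\cdot,0;\mu)$, and $\sigma_j$ denote the diagonal spectral densities of $\Om_0$. After the change of variable $\tau=\sqrt{1+\mu}$ the phase becomes $e^{it\tau}$, and Plancherel in $t$ converts the $L^2_t$ norm into a spectral integral:
\EQ{\nn
\|e^{it L_\pm^{1/2}} P_c f(x)\|_{L^2_t}^2 \lec \sum_j\int_0^\I \LR{\mu}^{1/2}\,|e_j(x;\mu)|^2\,\sigma_j(\mu)^2\,|\hat f_j(\mu)|^2\,d\mu,
}
the prefactor $\LR{\mu}^{1/2}$ arising from the Jacobian $\tau\,d\tau=\tfrac12\LR{\mu}^{1/2}\,d\mu$. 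The goal reduces to bounding this by $\lan x\ran^2\|\LR{\p_x}^{1/2} f\|_{L^2}^2$ uniformly in $x$.

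We split the $\mu$-integral at $\mu\sim 1$. For $\mu\lec 1$, the Volterra constructions underlying Lemma \ref{lem:L_+} deliver the polynomial bound $|e_j(x;\mu)|\lec\lan x\ran$, reflecting the linear growth of the zero-energy fundamental system $u_{0,\pm},u_{1,\pm}$, which is absorbed by the weight $\lan x\ran^{-1}$; simultaneously the spectral decay $\sigma_j(\mu)=O(\mu^{1/2})$ --- a direct consequence of $W(0)\ne 0$ in \eqref{eq:Wla}, i.e.\ of the absence of a threshold resonance --- eliminates the would-be threshold singularity and allows closure in $\|f\|_{L^2}$ via Parseval for the distorted transform, with no derivative loss. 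For $\mu\gg 1$, the uniform bounds $|\theta_0(\cdot;\mu)|\lec 1$ and $|\phi_0(\cdot;\mu)|\lec\mu^{-1/2}$, together with the free-like asymptotics $\sigma_1(\mu)=O(\mu^{-1/2})$ and $\sigma_2(\mu)=O(\mu^{1/2})$, render the integrand pointwise comparable to $\mu^{-1/2}|\hat f_1|^2 + \mu^{1/2}|\hat f_2|^2$ on $\mu\geq 1$; this is controlled by $\|\LR{\p_x}^{1/2} f\|_{L^2}^2$ using the intertwining identity \eqref{eq:inter2} and the $H^{1/2}$-boundedness of $W_\pm$ due to Weder and Artbazar-Yajima, which identifies the inhomogeneous Sobolev norm with $(1+L_\pm)^{1/4} P_c f$ in the spectral variable. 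The principal obstacle is the low-frequency regime: the analogous estimate for the free one-dimensional Klein-Gordon equation \emph{fails} at $\mu=0$ because of the $\mu^{-1/2}$ singularity of the free spectral measure in \eqref{eq:free}, and the entire argument succeeds here only because the absence of a threshold resonance (valid for $p>3$) supplies the $O(\mu^{1/2})$ cancellation encoded in Lemma \ref{lem:L_+}.
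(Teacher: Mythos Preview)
Your argument is correct and follows essentially the same route as the paper's: the distorted Fourier representation of Proposition~\ref{prop:FT}, Plancherel in $t$ after the change of variable $\tau=\sqrt{1+\mu}$ (the paper phrases this via duality with a test function $g$, which is the same computation), and the low/high frequency split exploiting the $O(\mu^{1/2})$ spectral density near threshold from Lemma~\ref{lem:L_+}. Your Minkowski reduction of the Duhamel estimate to the homogeneous one is a mild simplification over the paper, which instead repeats the duality argument directly for the retarded term.
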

\begin{proof} We begin with the first. Write $w(x)=\lan x\ran^{-1} $. Then by duality we need to estimate, with $g=g(t,x)$, and $\calF$
denoting the distorted Fourier transform of Proposition~\ref{prop:FT}, 
\EQ{\label{eq:fgdual}
&\Big\lan e^{\pm it  L_{\pm}^{\frac12} } P_{c}(L_{+}) f | w g\Big\ran = \int_{-\I}^{\I} dt \int_{0}^{\infty}   
 e^{\pm it(1+\la)^{\frac12}} \calF {f}(\la)^{T} \Omega_{0} (d\la) \calF( w  g(t))(\la)  \\
 &= \int_{-\I}^{\I} dt \int_{0}^{\infty}   
 e^{\pm it (1 + \la )^{\frac12}} \calF_{1} {f}(\la) \mu_{1} (\la) \int_{-\I}^{\I}  w(x)  g(t,x) \theta(x,\la) \, dx\,d\la\\ 
 &+  \int_{-\I}^{\I} dt \int_{0}^{\infty}    
 e^{\pm it (1+  \la )^{\frac12}} \calF_{2} {f}(\la) \mu_{2} (\la) \int_{-\I}^{\I}  w(x)  g(t,x) \phi(x,\la)\, dx \, d\la
}
where $\Omega_{0}(d\la)=\diag(\mu_{1},\mu_{2})\,d\la$, and $\theta,\phi$ are $\theta_{0}(x,0;z)$, $\phi_{0}(x,0;z)$ from above. 
Here $\la$ denotes the spectral variable of $\calL=L_{\pm }-1$. 
For small $\la$ one has  $$(1+\la)^{\frac12}=1+\frac12\la + O(\la^{2})$$ Up to change of variable in $\la$ (which we ignore) we can
therefore view the small~$\la$ integral as the usual Fourier transform. Denoting the Fourier transform of $g(t,x)$ in time by $\hat{g}(\la,x)$
we can bound the contribution of small $\la$ to~\eqref{eq:fgdual}  by means of~\eqref{eq:Omegala} as follows:
\EQ{
&\int_{0}^{1}  \int_{-\I}^{\I}  | \calF_{j} {f}(\la)| \,  |\hat{g}(\la,x)| [|\theta(x,\la)|+|\phi(x,\la)|] \, w(x)dx\, \mu_{j}(\la)\,  d\la\\
&\lec \sup_{0<\la<1}\sup_{x}w(x) \mu_{j}(\la)^{\frac12} [|\theta(x,\la)|+|\phi(x,\la)|]  \int_{-\I}^{\I} \Big( \int_{0}^{1}  | \calF_{j} {f}(\la)|^{2}   \mu_{j}(\la)\,  d\la\Big)^{\frac12}\\
&\qquad\qquad \times
\Big( \int_{0}^{1}  | \hat{g}(\la,y) |^{2}    \,  d\la\Big)^{\frac12}\, dy \lec \|f\|_{2} \|g\|_{L^{1}_{x}L^{2}_{t}}
}
In the final step we used the unitarity of the Fourier transform, both in the free and distorted cases, 
as well as the fact that uniformly in $\la>0$
\[
 \sup_{x} \lan x\ran^{-1} [|\theta(x,\la)|+|\phi(x,\la)|] \lec 1
\]
For $\la>1$, one has $(\la \ran^{\frac12}\simeq \la^{\frac12}$.
Hence, the first component for these $\la$-values is bounded by (we can ignore $w(x)$ in this regime, as well as $\theta$, cf.~\eqref{eq:free}), 
\EQ{
&\int_{1}^{\I} \int_{-\I}^{\I}  |\calF_{1} f(\la)| |\hat{g}(\sqrt{\la},x)| \mu_{1}(\la) \, dx d\la \\
& \lec \int_{1}^{\I} \int_{-\I}^{\I}  |\calF_{1} f(\la^{2})| |\hat{g}(\la,x)| \,\la \mu_{1}(\la^{2}) \, dx d\la \\
&\lec \int_{-\I}^{\I}   \Big(\int_{1}^{\I}  |\calF_{1} f(\la)|^{2}  \la \mu_{1}^{2}(\la)\,    d\la\Big)^{\frac12}   \Big(\int_{1}^{\I} |\hat{g}(\la,x)|^{2}\,  d\la\Big)^{\frac12} \, dx\\
&\lec \| \lan \p_{x}\ran^{\frac12} f\|_{2} \|g\|_{L^{1}_{x}L^{2}_{t}}
}
The final estimate here follows by the free asymptotics~\eqref{eq:free}. 

For the second inequality in the lemma one proceeds in a similar fashion. In fact, using the notation from the first part of the proof, 
\EQ{\nn
&\Big\lan  \int_{-\I}^{t} e^{\pm i(t-s)  L_{\pm}^{\frac12} } P_{c}(L_{\pm}) f(s)\, ds | w g\Big\ran \\
& = \int_{-\I}^{\I} dt \int_{-\I}^{t} ds \int_{-\I}^{\infty}   
 e^{\pm i(t-s)(1+ \la)^{\frac12}} \calF ({f}(s))(\la)^{T} \Omega_{0} (d\la) \calF (w  g(t))(\la)  \\
 &= \int_{-\I}^{\I} ds \int_{s}^{\I} dt \int_{0}^{\infty}   
 e^{\pm i(t-s) (1+\la)^{\frac12}} \calF_{1} ({f}(s)) (\la) \mu_{1} (\la) \int_{-\I}^{\I}  w(x)  g(t,x) \theta(x,\la) \, dx\,d\la\\ 
 &+  \int_{-\I}^{\I} ds \int_{s}^{\I} dt \int_{0}^{\infty}    
 e^{\pm i(t-s)(1+\la)^{\frac12}} \calF_{2} ({f}(s)) (\la) \mu_{2} (\la) \int_{-\I}^{\I}  w(x)  g(t,x) \phi(x,\la)\, dx \, d\la
}
Carrying out the $t$-integration, and performing similar arguments as in the previous case, shows that
the two final expressions here are
\[
 \lec \int \| \lan \p_{x}\ran^{\frac12} f(s)\|_{2}\, ds\;  \|g\|_{L^{1}_{x}L^{2}_{t}}
\]
as desired. 
\end{proof}

Note that \eqref{eq:KG1} cannot hold for the free case, i.e., $e^{it\lan \p_{x}^{2}\ran^{\frac12}}$ since the best point-wise decay
of the latter evolution is $t^{-\frac12}$. For the nonlinear analysis, it will be technically advantageous to work in $L^{2}_{x}$ rather
than with $L^{\I}_{x}$ on the left-hand side of~\eqref{eq:KG1}. This can easily be done just by switching to faster decaying weights.
 In addition, we shall add half of a derivative to the left-hand side, reflecting
the fact that we are working in the energy space $H^{1}$.  As standard derivatives do not commute with $L_{+}$, this step requires
some care. While one could use powers of $L_{+}$, via Lemma~\ref{lem:Mikhlin}, we instead rely on some algebra involving $L_{-}$. 
The precise statement is as follows. Recall that $\r$ is the ground state of $L_{+}$,  and that $Q'$ satisfies $L_{+} Q'=0$. 

\begin{cor}
\label{cor:upgrade} 
Any solution of 
\EQ{ \label{eq:L+u}
 \ddot u=-L_+u + F,\;\ u\perp \r, Q' 
}
satisfies 
\EQ{\label{eq:uF}
 \|\LR{x}^{-s}u\|_{L^2_tH^{1/2}_x} \lec \|u(0)\|_{H^1}+\|\dot u(0)\|_{L^2} + \| F\|_{L^{1}_{t}L^{2}_{x}}
 }
 with $s>\frac32$. 
\end{cor}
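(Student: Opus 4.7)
The strategy is to apply Lemma~\ref{lem:evol} to the Duhamel formula for~\eqref{eq:L+u}, first to obtain a baseline weighted $L^2_{t,x}$ bound without the half-derivative, then to upgrade it by inserting $L_+^{1/4}$, and finally to transfer $L_+^{1/4}$ to $\LR{\p_x}^{1/2}$ using algebra involving $L_-$.

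\emph{Baseline bound.} Since $u\perp\r,Q'$, the solution lies on $P_c(L_+)L^2$, where $L_+\ge 1$ (essential spectrum $[1,\I)$, with the two eigenvalues $-k^2,0$ projected out), so fractional powers $L_+^\al$ are well defined. Writing
\[
 u(t)=\cos(tL_+^{1/2})u(0)+L_+^{-1/2}\sin(tL_+^{1/2})\dot u(0)+\int_0^t L_+^{-1/2}\sin((t-s)L_+^{1/2})F(s)\,ds,
\]
decomposing $\cos,\sin$ into $e^{\pm itL_+^{1/2}}$, and applying Lemma~\ref{lem:evol} to each half-wave, the right-hand sides collapse to $\|\LR{\p_x}^{1/2}u(0)\|_2$, $\|\LR{\p_x}^{1/2}L_+^{-1/2}\dot u(0)\|_2$, and $\int\|\LR{\p_x}^{1/2}L_+^{-1/2}F(s)\|_2\,ds$. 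The operator $\LR{\p_x}^{1/2}L_+^{-1/2}$ is $L^2$-bounded on $P_c(L_+)L^2$: via the intertwining $L_+^{-1/2}P_c=W_\pm\LR{\p_x}^{-1}W_\pm^*$ together with the $H^s$-boundedness of $W_\pm$ (Weder~\cite{Weder}, Artbazar--Yajima~\cite{AY}), it reduces to the trivial boundedness of $\LR{\p_x}^{-1/2}$ on $L^2$. Hence
\[
 \|\LR{x}^{-1}u\|_{L^\I_xL^2_t}\lec\|u(0)\|_{H^1}+\|\dot u(0)\|_{L^2}+\|F\|_{L^1_tL^2_x},
\]
and Cauchy--Schwarz in $x$ (using $\int\LR{x}^{-2(s-1)}\,dx<\I$ for $s>3/2$) promotes this to $\|\LR{x}^{-s}u\|_{L^2_tL^2_x}\lec\text{RHS}$.

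\emph{Upgrade by $L_+^{1/4}$.} Since $L_+^{1/4}$ commutes with $L_+$, the function $L_+^{1/4}u$ satisfies the same equation with forcing $L_+^{1/4}F$. Tracing the baseline argument through, and again using the wave-operator intertwining to see that $\LR{\p_x}^{1/2}L_+^{\pm1/4}$ are bounded on $P_c L^2$, one obtains
\[
 \|\LR{x}^{-s}L_+^{1/4}u\|_{L^2_tL^2_x}\lec\|u(0)\|_{H^1}+\|\dot u(0)\|_{L^2}+\|F\|_{L^1_tL^2_x}.
\]

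\emph{Transfer to $\LR{\p_x}^{1/2}$ --- the main obstacle.} It remains to replace $L_+^{1/4}$ by $\LR{\p_x}^{1/2}$ inside the weighted norm. Derivatives do not commute with $L_+$, so $\LR{\p_x}^{1/2}-L_+^{1/4}$ is not a Fourier multiplier of $L_+$ handled by Lemma~\ref{lem:Mikhlin}. Following the hint in the statement, I exploit the clean factorization $L_-=D^*D$ with $D=\p_x-Q'/Q$ (immediate from $L_-Q=0$) together with the identity $L_+-L_-=-(p-1)Q^{p-1}$. The Dunford-type representation
\[
 L_+^{1/4}-L_-^{1/4}=\frac{1}{2\pi i}\oint z^{1/4}(z-L_+)^{-1}(p-1)Q^{p-1}(z-L_-)^{-1}\,dz
\]
localizes this difference through the Schwartz factor $Q^{p-1}$; a parallel comparison of $L_-^{1/4}$ with $\LR{\p_x}^{1/2}$ uses the $D^*D$ structure. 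Consequently $(\LR{\p_x}^{1/2}-L_+^{1/4})P_c$ has a Schwartz integral kernel, so
\[
 \|\LR{x}^{-s}(\LR{\p_x}^{1/2}-L_+^{1/4})u\|_{L^2_{t,x}}\lec\|\LR{x}^{-N}u\|_{L^2_{t,x}}
\]
for any large $N$, which is absorbed by the baseline with a heavier weight. A final commutator step compares $\|\LR{x}^{-s}\LR{\p_x}^{1/2}u\|_{L^2_{t,x}}$ with $\|\LR{x}^{-s}u\|_{L^2_tH^{1/2}_x}$ up to terms $\|\LR{x}^{-s-1}u\|_{L^2_{t,x}}$, again absorbed. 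The main technical work is making the Schwartz-kernel claim rigorous, and the $L_-$-factorization provides sharper control than a direct $L_+$-calculus would.
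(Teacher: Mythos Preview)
Your baseline $L^2_{t,x}$ bound is correct and matches the paper's first step. The divergence begins at the half-derivative upgrade, and there your argument has a real gap, while the paper takes a much shorter path.

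\textbf{What the paper actually does.} Instead of inserting $L_+^{1/4}$ and then trying to identify it with $\LR{\p_x}^{1/2}$ inside a weighted norm, the paper uses the first-order Darboux intertwiner
\[
U:=\r\,\p_x\,\r^{-1},\qquad \r=C\,Q^{(p+1)/2},\qquad UL_+=L_-U,\qquad U^*Q=-CQ'.
\]
If $u$ solves $\ddot u=-L_+u+F$ with $u\perp\r,Q'$, then $v:=Uu$ solves $\ddot v=-L_-v+UF$ with $v\perp Q$, so Lemma~\ref{lem:evol} (applied to $L_-$) gives the same weighted $L^2_{t,x}$ bound for $v$. Since $U=\p_x+\text{(bounded smooth coefficient)}$, one has
\[
\|\LR{x}^{-s}u\|_{L^2_tH^1_x}\lec\|\LR{x}^{-s}u\|_{L^2_{t,x}}+\|\LR{x}^{-s}Uu\|_{L^2_{t,x}}
\lec\|u(0)\|_{H^{3/2}}+\|\dot u(0)\|_{H^{1/2}}+\|F\|_{L^1_tH^{1/2}_x}.
\]
Interpolating this $H^1$ bound with the $L^2$ baseline yields~\eqref{eq:uF}. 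No fractional operator calculus, no kernel comparison --- the entire upgrade is the algebraic identity $UL_+=L_-U$ plus one interpolation.

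\textbf{Where your argument is incomplete.} Your route hinges on the assertion that $(\LR{\p_x}^{1/2}-L_+^{1/4})P_c$ has a Schwartz integral kernel. You do not prove this; you only sketch a Dunford contour for $L_+^{1/4}-L_-^{1/4}$ and gesture at $L_-=D^*D$. Two concrete problems: (i) $L_-$ has a zero eigenvalue ($L_-Q=0$), so defining $L_-^{1/4}$ and handling the contour near the bottom of the spectrum is delicate, and nothing you wrote addresses it; (ii) even granting the $L_+^{1/4}\!-\!L_-^{1/4}$ step, the further comparison of $L_-^{1/4}$ with $\LR{\p_x}^{1/2}$ is an entirely separate problem of the same type --- you have not reduced the difficulty, only relocated it. The claim may well be true for Schwartz potentials, but establishing it is substantially more work than the paper's two-line intertwining argument, and as written your proof stops precisely at the step that carries all the weight.

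In short: your $L_+^{1/4}$ strategy is plausible in spirit but leaves the hard step as an unproved assertion; the paper bypasses it entirely with the explicit intertwiner $U=\r\p_x\r^{-1}$ followed by interpolation.
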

\begin{proof}
By Lemma~\ref{lem:evol}, any $u$ as in \eqref{eq:L+u} satisfies 
\EQ{ \label{est u}
 \|\LR{x}^{-1}u\|_{L^\I_xL^2_t}\lec\|u(0)\|_{H^{1/2}}+\|\dot u(0)\|_{H^{-1/2}} + \| F\|_{L^{1}_{t}H^{-1/2}} }
The left-hand side dominates 
\EQ{ 
 \|\LR{x}^{-s}u\|_{L^2_tL^2_x} \pq (s>3/2).}
Similarly,  any solution of $\ddot v=-L_-v + \tilde F$ with $\ v\perp Q$ satisfies 
\EQ{ \label{est v}
  \|\LR{x}^{-s}v\|_{L^2_{t,x}}\lec \|v(0)\|_{H^{1/2}}+\|\dot v(0)\|_{H^{-1/2}} + \| \tilde F\|_{L^{1}_{t}H^{-1/2}} .}
In order to estimate the derivative, we use the special one-dimensional property
\EQ{
 U:=\r\p_x\r^{-1}, \pq \r=CQ^{(p+1)/2}, \pq UL_+=L_-U, \pq U^*Q=-CQ_x, }
where $C$ denotes positive constants dependent on $p$. 

In particular, with $u$ as before,  $v:=Uu$ and $\tilde F= UF$ satisfies \eqref{est v}, whence
\EQ{
 \|\LR{x}^{-s}u\|_{L^2_tH^1_x}
 \pt\lec \|\LR{x}^{-s}u\|_{L^2_{t,x}}+\|\LR{x}^{-s}Uu\|_{L^2_{t,x}} 
 \pr\lec\|u(0)\|_{H^{1/2}}+\|\dot u(0)\|_{H^{-1/2}}+\|Uu(0)\|_{H^{1/2}}+\|U\dot u(0)\|_{H^{-1/2}} 
 \pr \qquad + \| F\|_{L^{1}_{t}H^{-1/2}} + \| \tilde F\|_{L^{1}_{t}H^{-1/2}}
 \pr\lec \|u(0)\|_{H^{3/2}} + \|\dot u(0)\|_{H^{1/2}} +  \| F\|_{L^{1}_{t}H^{1/2}} }
Interpolating it with the estimate without the derivative yields 
\EQ{
 \|\LR{x}^{-s}u\|_{L^2_tH^{1/2}_x} \lec \|u(0)\|_{H^1}+\|\dot u(0)\|_{L^2} + \| F\|_{L^{1}_{t}L^{2}_{x}} 
 }
as claimed.
\end{proof}

The  significance of \eqref{eq:uF} lies with the perturbative nonlinear analysis of Section~\ref{sec:nonlinear}. 
In fact, placing a nonlinear term such as $Q^{q}u^{2}$ in $L^{1}_{t}L^{2}_{x}$ yields 
\EQ{\label{eq:Qnice}
 \|Q^q u^2\|_{L^1_tL^2_x} \lec \|\LR{x}^{-s}u\|_{L^2_tL^4_x}^2 \lec \|\LR{x}^{-s}u\|_{L^2_t H^{\frac12}_x}^2 }
for $H^1$ solutions $u$. This will allows us to close our estimates very easily.

We shall also require Strichartz estimates on the Klein-Gordon equation relative to $\calL$.
Note that unlike Lemma~\ref{lem:evol}, Corollary~\ref{cor:calLStr} has nothing to do with $\calL$ having a zero
energy resonance or not. 

\begin{cor}
\label{cor:calLStr} 
For any Schwartz function $u$ in $\R^{1+1}_{t,x}$ with $u=P_c(\calL)u$ one has the aprori bound 
\EQ{
\label{eq:calLstr}
\| u\|_{L^p_t L^r_x} \lec   \| u[0]\|_{H^1\times L^2} + \| \ddot u + L_{\pm} u  \|_{L^1_t L^2_x} 
}
for any $4<p\le \I$, $0<\frac{1}{r}\le \frac12-\frac{2}{p}$. In particular, one can take $r=2p$ for any $5\le p<\I$. 
\end{cor}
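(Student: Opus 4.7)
The plan is to reduce~\eqref{eq:calLstr} to the free $1$D Klein--Gordon Strichartz estimate, which has already been invoked in the paper via~\eqref{eq:KGS} combined with Sobolev embedding. The vehicle for this reduction is the intertwining identity~\eqref{eq:inter2} applied to the functions $\cos(tL_\pm^{1/2})$ and $L_\pm^{-1/2}\sin(tL_\pm^{1/2})$. Since $L_\pm=\calL+1$, both are bounded functions of $\calL$ on the range of $P_c(\calL)$, so
\[
e^{\pm itL_\pm^{1/2}}P_c(\calL) = W_\pm\, e^{\pm it\lan\p_x\ran}\, W_\pm^*,
\]
and the $W^{s,r}$-boundedness of $W_\pm$ and $W_\pm^*$ (Weder~\cite{Weder}, Artbazar--Yajima~\cite{AY}) allows the free estimate to be transferred to the perturbed setting without loss.

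First, I would use Duhamel's formula
\[
u(t) = \cos(tL_\pm^{1/2})u(0) + \frac{\sin(tL_\pm^{1/2})}{L_\pm^{1/2}}\dot u(0) + \int_0^t \frac{\sin((t-s)L_\pm^{1/2})}{L_\pm^{1/2}} F(s)\,ds
\]
with $F:=\ddot u+L_\pm u$, and insert $P_c(\calL)$ in front of every propagator (which is legitimate because $u=P_c(\calL)u$ and $L_\pm$ commutes with $P_c(\calL)$, so automatically $F=P_c(\calL)F$ as well). For the homogeneous part, the intertwining identity followed by $L^r$-boundedness of $W_\pm$, then by the free Klein--Gordon Strichartz inequality with the admissible pair $(p,r)$ in the claimed range, and finally by $H^1$-boundedness of $W_\pm^*$, yields the bound in terms of $\|u(0)\|_{H^1}$. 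The factor $L_\pm^{-1/2}P_c(\calL)$ is a Mikhlin-type multiplier with symbol $(\la+1)^{-1/2}$ in the sense of Lemma~\ref{lem:Mikhlin}, so it contributes no loss and takes care of the $\dot u(0)\in L^2$ contribution.

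For the inhomogeneous Duhamel integral I would first prove the untruncated version (integral over all of $\R$) by a $TT^*$ argument whose dual is precisely the homogeneous estimate just established, and then restore the retarded truncation $\int_0^t$ via the Christ--Kiselev lemma; the separation $1<p$ (which is guaranteed since $p>4$) is exactly the hypothesis needed.

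The one step that requires some care, and which I see as the main bookkeeping point rather than a genuine obstacle, is verifying that the claimed admissible range $4<p\le\infty$, $0<1/r\le \tfrac12-\tfrac{2}{p}$ is exactly what~\eqref{eq:KGS} combined with Sobolev embedding of $B^{\alpha}_{q,2}\hookrightarrow L^r_x$ delivers in one dimension. This is the range used in~\cite{IMN} and implicit in the proof of Lemma~\ref{0freq scat}, so no new harmonic analysis is needed. Notably, the argument is insensitive to whether $\calL$ has a threshold resonance or not (in contrast to Lemma~\ref{lem:evol}), because only the intertwining property and the $W^{s,r}$-boundedness of $W_\pm$ are used---nothing about the low-energy behavior of the spectral measure enters.
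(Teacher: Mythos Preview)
Your approach is correct and rests on the same core idea as the paper's: transfer the free one-dimensional Klein--Gordon Strichartz estimate to $L_\pm$ via the $L^r$-boundedness of the wave operators, and read off the admissible range $4<p\le\infty$, $0<1/r\le 1/2-2/p$ from the embedding $B^\alpha_{q,2}\hookrightarrow L^r$ applied to~\eqref{eq:KGS}. The paper's execution is a bit more direct, however. Instead of intertwining each propagator in the Duhamel formula and then invoking a $TT^*$ argument together with Christ--Kiselev to handle the retarded integral, the paper conjugates the whole equation: setting $\tilde u:=W_\pm^* u$, the intertwining identity yields $\Box\tilde u+\tilde u=W_\pm^*F$, so the free \emph{inhomogeneous} Strichartz estimate~\eqref{eq:STR3} applies directly to $\tilde u$, and one pulls back to $u=W_\pm\tilde u$ using the $L^r$- and $H^1\times L^2$-boundedness of $W_\pm$, $W_\pm^*$. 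This sidesteps Christ--Kiselev entirely, since the inhomogeneous term is already built into~\eqref{eq:STR3}. Your route works just as well; it simply reproves a piece of the free estimate that the paper takes as given.
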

\begin{proof}
We first recall the following Strichartz estimates for the free case, see for example~\cite[Section~4.2]{IMN}: for any Schwartz function $u$ in $\R^{1+1}_{t,x}$ there is the aprori bound 
\EQ{
\label{eq:STR1}
& \| u\|_{L^4_t B^{\frac14}_{\I,2}} + \| u\|_{L^\I_t H^1} + \|\dot u\|_{L^\I L^2}  \lec \| u[0]\|_{H^1\times L^2} + \| \Box u + u \|_{L^{\frac43}_t B^{\frac34}_{1,2}+L^1_t L^2_x}
}
Here $B^\sigma_{p,2}$ are the usual inhomogeneous Besov spaces. By interpolation, one obtains the following bounds: for any $4\le p\le\I$, and any $2\le q\le \I$,  
\EQ{
\label{eq:STR2}
\| u\|_{L^p_t B^\alpha_{q,2}} \lec   \| u[0]\|_{H^1\times L^2} + \| \Box u + u \|_{L^{\frac43}_t B^{\frac34}_{1,2}+L^1_t L^2_x} 
}
where $\al=1-\frac{3}{p}$, $\frac{1}{q}=\frac12-\frac{2}{p}$.  By the embedding $B^\al_{q,2}\hookrightarrow L^r(\R)$ for any $q\le r<\I$ provided $\al\ge \frac{1}{q}-\frac{1}{r}$,
we now conclude that 
\EQ{
\label{eq:STR3}
\| u\|_{L^p_t L^r_x} \lec   \| u[0]\|_{H^1\times L^2} + \| \Box u + u \|_{L^{\frac43}_t B^{\frac34}_{1,2}+L^1_t L^2_x} 
}
for any $4<p\le \I$, $0<\frac{1}{r}\le \frac12-\frac{2}{p}$. 
 Finally, \eqref{eq:calLstr} follows from \eqref{eq:STR3} and the $L^r$-boundedness of the wave operators of $\calL$ for $1<r<\I$. 
\end{proof}

The condition $p\ge5$ is significant for the Strichartz-based small
data scattering theory: indeed, consider the equation 
\[
 \Box u+u=\pm |u|^{p-1} u
\]
on the line. Then placing the nonlinearity in $L^1_t L^2_x$ requires Strichartz control on $\| u\|_{L^p_t L^{2p}_x}$. However, the latter means
that we need $\frac{1}{2p}\le \frac12-\frac{2}{p}$, or $p\ge5$ (the latter being the $L^2$-critical power in dimension~$1$).

\subsection{Proof of Proposition~\ref{prop:CS}} \label{sec:nonlinear}
We shall now show  that for even functions one can construct the center-stable manifold for
\EQ{\label{eq:NLKG7}
\Box u + u = |u|^{p-1}u
}
in $1$-dim where $p> 5$ is fixed. 
Writing $u=Q+v$ with $v$ small, one has
\EQ{\label{eq:linearizedNLKG}
\ddot v + L_{+} v = N(Q,v)
}
where $L_{+}=-\p_{x}^{2}+1 - p Q^{p-1}$. The nonlinearity $N$ satisfies the bound
\EQ{\label{eq:Nest}
|N(Q,v)|\lec Q^{p-2} |v|^{2} + |v|^{p} 
}
The operator $L_{+}$ has negative spectrum, $L_{+}\r = -k^{2}\r$ and one therefore has to write $v(t,x)=\mu(t) \rho + w(t,x)$. The resulting
equations are
\EQ{\label{eq:muwsys}
(\ddot\mu - k^2 \mu)(t)\rho &= P_\rho N(Q,v) \\
\ddot w + P_\rho^\perp L_{+} w &= P_\rho^\perp N(Q,v)
}
To control the $w$-equation we shall use the Strichartz norm
\EQ{\label{eq:Snorm}
\|w\|_{S}:= \| (w,\dot w)\|_{H^{1}\times L^{2}}+ \|w\|_{L^{p}_{t} L^{2p}_{x}}
} 
as well as the localized norm
\EQ{\label{eq:Lnorm}
\| w\|_{L}  :=    \| \lan x\ran^{-2}  w\|_{L^{2}_{t,x}}
}
We now close  the estimates for $w$ in the space $\| w\|_{X}=\|w\|_{S}+ \|w\|_{L}$, and those for $\mu$ in the space $L^1\cap L^\infty([0,\I))$. 
Applying the Strichartz estimates for $L_{+}$ and using~\eqref{eq:Qnice} yields
\EQ{\label{eq:P1}
\| w\|_{S} &\lec \| w[0]\|_{\HH} + \| w\|_{L}^{2} + \| w\|_{S}^{p}  
\lec \delta + \|w\|_{X}^{2}
}
where $\|w[0]\|_{\HH}<\delta<\nu$. 
Note that we are assuming the desired bound on $v$ appearing on the right-hand side of the equation, in keeping with the usual contraction argument setup. 
To bound the local norm, we use Corollary~\ref{cor:upgrade}  to conclude that 
\EQ{\label{eq:P2}
 \| \lan x\ran^{-2}  w \|_{L^{2}_{t,x}}  &\lec \| \tilde v[0]\|_{\calH} 
 +  \| N(Q,v)\|_{L^{1}_{t}L^{2}_{x}}
 }
which can be estimated as before. Combining~\eqref{eq:P1} with~\eqref{eq:P2} yields 
\EQ{
\label{eq:wbd}
\|w\|_{X} \lec \delta + \|v\|_{X}^{2}
}
For the $\mu$ equation in \eqref{eq:muwsys} we introduce
\[
 \mu_\pm = \frac12(\mu\pm \frac{1}{k}\dot\mu),\qquad \mu=\frac12(\mu_+ +\mu_-)
\]
which implies that
\EQ{   \label{eq:muODE}
\dot \mu_\pm &= \frac12(\dot\mu\pm k\mu) \pm \frac{1}{2k} P_\rho N(Q,v)  \\
&= \pm k\mu_\pm \pm \frac{1}{2k} P_\rho N(Q,v) 
}
The homogeneous solutions are $\mu_\pm(t)= e^{\pm kt}$. Under the stability condition
\EQ{\label{eq:stabcond}
0= \mu_+(0) +  \frac{1}{2k} \int_0^\I e^{-ks}  P_\rho N(Q,v)(s)\, ds
}
the solutions to \eqref{eq:muODE} are
\EQ{
\label{eq:musol}
\mu_+(t) &= -\frac{1}{2k}\int_t^\infty e^{-k(s-t)}  P_\rho N(Q,v)(s)\, ds \\
\mu_-(t) &= e^{-kt} \mu_-(0) + \frac{1}{2k}\int_0^t e^{-k(t-s)} P_\rho N(Q,v)(s)\, ds
}
Therefore, assuming~\eqref{eq:stabcond}, one obtains the estimates
\EQ{ \nn 
\| \mu\|_{L^1\cap L^\I((0,\I))} &\lec |\mu_-(0)|+ \int_0^\I | \lan N(Q,v)(s),\rho\ran|\, ds \\
&\lec \de+ \| N(Q,v)\|_{L^{1}_{t } L^{2}_{x}} \lec \de + \|v\|_X^2
}
It is now straightforward to show that the iteration (or, the fixed-point argument) yields a unique solution $(\mu, w)\in X_{0}\times X$,
where $X_{0}= L^{1}\cap L^{\I}((0,\I))$ and $X$ is as before, with $\| (\mu,w)\|_{X_{0}\times X}\lec \de$. Moreover, $w$ scatters with
scattering data
\[
w_{\I}:= w(0) + \int_{0}^{\I} \frac{\sin(t\omega)}{\omega} N_{\r}^{\perp}(Q,v)(t)\, dt
\]
where $\omega:=(P_{c}(L_{+}) L_{+})^{\frac12}$. 

For the uniqueness, we note that if $(\mu,w)$ are such that 
\[
\sup_{t\ge0} [|\mu(t)| + |\dot\mu(t)| + \|w(t)\|_{H^{1}} + \| \dot w(t)\|_{L^{2}} ] \lec \de
\]
then necessarily \eqref{eq:stabcond} holds, whence also \eqref{eq:musol}. 
Denoting for any finite $T>0$
\[
\| (\mu,w)\|_{X_{T}} := \| \mu \|_{L^{1}(0,T)} + \|\mu \|_{L^{\I}(T,\I)} + \|w\|_{S(0,T)}
\]
we thus conclude from the integral equations that 
\EQ{
\sup_{T>0} \| (\mu ,w)\|_{X_{T}} \lec \delta
}
Hence also $\| (\mu,w)\|_{X}\lec \delta$ and one obtains the claim from the uniqueness of the fixed point. 
Finally, \eqref{eq:TM} holds since the distance on the left-hand side of that inequality is proportional to $\mu_{+}(0)$
as given by~\eqref{eq:stabcond}. Since the integral in the expression is of size $O(\de^{2})$, we are done.

\subsection{Proof of Theorem~\ref{thm:main} and Corollary~\ref{cor:DM}}

We only need to address the behavior of those solutions in Theorem~\ref{thm:9set} which are trapped as $t\to\I$. 
If so, then the uniqueness part of Proposition~\ref{prop:CS} states that $\vec u(t)$ lies on the manifold $\calM$ constructed 
in that proposition for all large~$t$. Therefore, the set of all data which lead to solutions trapped by $Q$ in forward times is
the maximal backward evolution of $\calM$ under the flow of~\eqref{eq:NLKG}. This can easily be seen to be a $C^1$  manifold (in fact, it has better smoothness). 
The scattering to~$Q$ was shown in Proposition~\ref{prop:CS}, which concludes the proof of Theorem~\ref{thm:main}.

For the corollary, we proceed as in \cite{NakS}. Thus, if $E(\vec u)=E(Q,0)$ and if the solution is trapped, then $\vec w_{\I}=0$. Therefore, the entire dynamics is
controlled by a single quantity, namely $\mu_{-}(t)$. This in turn is determined uniquely by $\mu_{-}(0)$. It is clear that $\mu_{-}(0)=0$ forces
$\mu_{-}\equiv0$, or in other words, $u=Q$ for all times. If $\mu_{-}(0)\ne0$, then it can never change sign which leads to the two solutions $W_{\pm}$. Since
these are clearly one-dimensional and time-translation is a symmetry for these special solutions, they are all obtained by time-translation of two fixed representatives
of solutions converging to $Q$. Since the sign of $K_{0}$ is the same as that of $-\mu_{-}$ upon ejection from a neighborhood of $(Q,0)$, we see that 
$W_{\pm}$ have the behavior as $t\to-\I$ as described in Corollary~\ref{cor:DM}.

\end{document}